\newtheorem{theorem}{Theorem}
\newtheorem{lemma}{Lemma}[section]
\newtheorem{definition}{Definition}
\newtheorem{proposition}{Proposition}[section]
\newtheorem{assumption}{Assumption}
\newtheorem*{assumption*}{Assumption}
\newtheorem*{remarks*}{Remarks}
\newtheorem*{remark*}{Remark}
\newcommand{\R}{\mathbb{R}} 
\newcommand{\C}{\mathbb{C}} 
\newcommand{\N}{\mathbb{N}} 
\newcommand{\Z}{\mathbb{Z}} 
\newcommand{\eps}{\varepsilon} 
\newcommand{\comment}[1]{}
\numberwithin{equation}{section}
\newcommand{\be}{\begin{equation}}
\newcommand{\ee}{\end{equation}}
\newcommand{\bes}{\begin{equation*}}
\newcommand{\ees}{\end{equation*}}
\newcommand{\ii}{\mathrm{i}}
\renewcommand{\leq}{\leqslant}
\renewcommand{\geq}{\geqslant}
\def\section{\@startsection{section}{1}%
  \z@{1.5\linespacing\@plus\linespacing}{.5\linespacing}%
  {\normalfont\bfseries\large\centering}}
\begin{document}

\title[Symmetry and Uniqueness of Ground States for elliptic PDEs]{On symmetry and uniqueness of ground states for linear and nonlinear elliptic PDEs}

\begin{abstract}
We study ground state solutions for linear and nonlinear elliptic PDEs in $\R^n$ with (pseudo-)differential operators of arbitrary order. We prove a general symmetry result in the nonlinear case as well as a uniqueness result for ground states in the linear case. In particular, we can deal with problems (e.\,g.~higher order PDEs) that cannot be tackled by usual methods such as maximum principles, moving planes, or Polya--Szeg\"o inequalities. Instead, we use arguments based on the Fourier transform and we apply a rigidity result for the Hardy-Littlewood majorant problem in $\R^n$ recently obtained by the last two authors of the present paper.
\end{abstract}

\author[L. Bugiera]{Lars Bugiera}
\address{Institute of Medical Biometry and Statistics, University of Freiburg, Hebelstrasse 11, D-79104 Freiburg i. Br., Germany.}
\email{bugiera@imbi.uni-freiburg.de}

\author[E. Lenzmann]{Enno Lenzmann}
\address{University of Basel, Department of Mathematics and Computer Science, Spiegelgasse 1, CH-4051 Basel, Switzerland.}
\email{enno.lenzmann@unibas.ch}

\author[J. Sok]{J\'er\'emy Sok}
\address{}
\email{jeremyvithya.sok@unipd.it}

\maketitle


\section{Introduction and Main Results}

In this short paper, we study symmetry properties and uniqueness of ground states for linear and nonlinear elliptic PDEs posed on $\R^n$. In particular, we will be interested in a general class of problems (including higher-order PDEs) which cannot be studied by classical methods such as maximum principles or Polya-Szeg\"o inequalities. Instead our approach here is based on Fourier methods together with a classification of the Hardy-Littlewood majorant problem in $\R^n$, which was recently obtained in \cite{LeSo-18}.

For a convenient organization of this paper, we will present our results on linear and nonlinear problems in two separate subsections as follows.

\subsection{Linear Results} 
Let $s > 0$ be a real number. We consider ground states $\psi \in H^s(\R^n)$ of linear equations of the from 
\be \label{eq:lin}
P(D) \psi + V \psi = E \psi  ,
\ee
where $E \in \R$ is the eigenvalue and $V : \R^n \to \R$ denotes a given potential. Here $P(D)$ stands for a self-adjoint, elliptic constant coefficient pseudo-differential operator of order $2s$. More precisely, we assume the following condition.

\begin{assumption} \label{ass:lin}
Let $s>0$. The pseudo-differential operator $P(D)$ is given by
$$
\widehat{(P(D) f)}(\xi) = p(\xi) \widehat{f}(\xi),
$$
with some continuous function $p : \R^n \to \R$ that satisfies the estimates
$$
A |\xi|^{2s} + c \leq p(\xi) \leq B |\xi|^{2s} \quad \mbox{for all $\xi \in \R^n$}
$$
with suitable constants $A >0$, $B> 0$, and $c \in \R$.
\end{assumption}

Let us now suppose that $P(D)$ satisfies Assumption \ref{ass:lin}. We assume that $V : \R^n \to \R$ is a bounded potential\footnote{We could relax this condition to unbounded potentials $V \in L^\infty(\R^n) + L^p(\R^n)$ with $p > \max \{n/2s,1\}$. For the sake of simplicity, we omit this generalization here.}. Hence we can consider the well-defined minimization problem
\be \label{eq:E0}
E_0 = \inf \{ \langle f, (P(D) + V) f \rangle : f \in H^s(\R^n), \; \| f\|_{L^2}=1 \}  > -\infty.
\ee  
Furthermore, if we assume that $V(x) \to 0$ as $|x| \to \infty$ in the sense that  $\{ |V(x)| > \eps \}$ has  finite Lebesgue measure for every $\eps > 0$, it is easy to see that
\be
E_0 \leq \inf_{\xi \in \R^n} p(\xi) = \inf \sigma_{\mathrm{ess}}(H),
\ee
where $\sigma_{\mathrm{ess}}(H)$ denotes the essential spectrum of the self-adjoint operator $H=P(D)+V$ defined via the quadratic form appearing in \eqref{eq:E0}. Provided a minimizer $\psi \in H^s(\R^n)$ for \eqref{eq:E0} exists, it is easy to see $\psi$ solves \eqref{eq:lin} with $E=E_0$. Conversely, any solution $\psi \in H^s(\R^n) \setminus \{0\}$ of \eqref{eq:lin} with $E=E_0$ is a minimizer of problem \eqref{eq:E0} up to a trivial rescaling to ensure the normalization condition $\| \psi \|_{L^2}=1$. Following usual nomenclature in spectral theory of Schr\"odinger operators, we refer to such minimizing solutions $\psi \in H^s(\R^n)$ as \textbf{ground states} for the linear problem \eqref{eq:lin}. To have a better contradistinction for the nonlinear problems discussed below, we will also use the term {\bf linear ground state} sometimes.

In the setting of Schr\"odinger operators when $P(D) = -\Delta$, we remark that uniqueness of ground states $\psi$ (up to a trivial multiplicative constant) is a classical result, which can be proven by an wide array of known methods such as  maximum principles, Polya-Szeg\"o principle, and Perron-Frobenius arguments involving the corresponding heat kernel $e^{t \Delta}$. Also, the fractional case for $P=(-\Delta)^s$ with $0 < s <1$ can be readily tackled with such methods. 

However, it is fair to say that the study of uniqueness of ground states of linear problems like \eqref{eq:lin} becomes quite elusive in the case of operators $P(D)$ with higher order $2s > 1$. In fact, uniqueness of ground states may fail in such cases. But in certain natural cases of interest (e.\,g.~arising from linearizations around ground states of nonlinear PDEs), the potential $V$ does have the noteworthy property of having a negative Fourier transform $\widehat{V} < 0$ almost everywhere. As our first main result in this paper, we prove that ground states for \eqref{eq:lin} are in fact unique (up to a trivial constant) under this condition on $V$.

\begin{theorem}[Uniquenes of Linear Ground States] \label{thm:lin}
Let $n \geq 1$, $s>0$, and suppose that $P(D)$ satisfies Assumption \ref{ass:lin}. Assume that $V : \R^n \to \R$ has a Fourier transform $\widehat{V} \in L^1(\R^n) \cap L^2(\R^n)$ with $\widehat{V}(\xi) < 0$ for almost every $\xi \in \R^n$. Finally, we suppose that $E_0 < \inf_{\xi \in \R^n} p(\xi)$ holds in \eqref{eq:E0}. Then we have the following properties.

\begin{enumerate}
\item[(a)] \textbf{Uniqueness:} The ground state solution $\psi \in H^s(\R^n)$ for \eqref{eq:lin} is unique (up to a constant phase). Moreover, we have the strict positivity property of its Fourier transform 
$$
e^{\ii \theta} \widehat{\psi}(\xi) > 0 \quad \mbox{for all $\xi \in \R^n$},
$$ 
where $\theta \in \R$ is a constant.
\item[(b)] \textbf{Symmetry:} Up to a constant phase, the ground state $\psi$ in (a)  satisfies
$$
\psi(-x) = \overline{\psi(x)} \quad \mbox{for a.\,e.~$x \in \R^n$.}
$$
If, in addition, the symbol $p(-\xi)=p(\xi)$ is even, then $\psi : \R^n \to \R$ is real-valued (up to a constant phase).
\end{enumerate}
\end{theorem}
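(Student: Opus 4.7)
The plan is to apply the \emph{Fourier symmetrization} $\psi \mapsto \psi^\sharp$, defined by $\widehat{\psi^\sharp}(\xi) := |\widehat{\psi}(\xi)|$, and to show that it does not increase the quadratic form $\langle f,(P(D)+V)f\rangle$ at fixed $L^2$-mass. Since the $L^2$-norm and the kinetic piece $\int p(\xi)|\widehat{\psi}(\xi)|^2 \diff\xi$ depend only on $|\widehat{\psi}|$, the entire argument reduces to establishing the potential-energy inequality $\int V|\psi^\sharp|^2 \diff x \leq \int V|\psi|^2 \diff x$.

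To obtain this, I would rewrite the potential term via Plancherel and the convolution theorem as the double integral
\be
\int_{\R^n} V|\psi|^2 \diff x = c_n \int_{\R^n}\int_{\R^n} \widehat{V}(\eta - \zeta)\, \widehat{\psi}(\zeta)\overline{\widehat{\psi}(\eta)} \diff\zeta \diff\eta,
\ee
with a positive normalization constant $c_n$. Writing $\widehat{\psi} = a e^{\ii\alpha}$ in polar form and symmetrizing the double integral in $(\zeta,\eta)$ (using that $\widehat{V}$ is real-valued and hence, by the reality of $V$, automatically even), a short calculation produces the identity
\be
\int V|\psi|^2 \diff x - \int V|\psi^\sharp|^2 \diff x = c_n \int\int |\widehat{V}(\eta-\zeta)|\, a(\zeta)\, a(\eta)\,\bigl[1 - \cos(\alpha(\zeta)-\alpha(\eta))\bigr] \diff\zeta \diff\eta \geq 0.
\ee
Hence $\psi^\sharp$ is also a minimizer for \eqref{eq:E0} and any ground state $\psi$ must achieve equality. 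The equality case forces $\cos(\alpha(\zeta)-\alpha(\eta)) = 1$ for a.e.\ $(\zeta,\eta)$ with $a(\zeta)a(\eta) > 0$ (using that $|\widehat{V}| > 0$ a.e.), which is exactly the rigidity content of the Hardy--Littlewood majorant classification obtained in \cite{LeSo-18}. Invoking this rigidity, $\alpha$ must be constant on the support of $a$; equivalently,
\be
\widehat{\psi}(\xi) = e^{\ii\theta}|\widehat{\psi}(\xi)| \quad \text{a.e.\ for a single constant } \theta \in \R.
\ee
I expect this step (upgrading an a.e.\ pairwise phase relation to a globally constant phase) to be the main obstacle, and it is precisely where the input from \cite{LeSo-18} enters.

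The strict positivity in part (a) is then recovered from the eigenvalue equation \eqref{eq:lin} written in Fourier variables, $(p(\xi) - E_0)\widehat{\psi}(\xi) = -\widehat{V\psi}(\xi)$, with $\widehat{V\psi}$ a convolution of $\widehat{V}$ and $\widehat{\psi}$ up to a positive constant. Inserting $\widehat{\psi} = e^{\ii\theta}|\widehat{\psi}|$ turns the right-hand side into a positive constant times $e^{\ii\theta}\bigl((-\widehat{V})*|\widehat{\psi}|\bigr)(\xi)$, which is strictly positive at every $\xi \in \R^n$ because $-\widehat{V} > 0$ a.e.\ and $|\widehat{\psi}| \not\equiv 0$; combined with $p(\xi) - E_0 > 0$ from the spectral hypothesis, this yields $e^{-\ii\theta}\widehat{\psi}(\xi) > 0$ everywhere. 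Uniqueness is then immediate: two normalized ground states $\psi_1,\psi_2$ satisfy $\widehat{\psi_j} = e^{\ii\theta_j}|\widehat{\psi_j}|$ with $|\widehat{\psi_j}|>0$ everywhere, so $\langle\psi_1,\psi_2\rangle = e^{\ii(\theta_1-\theta_2)}\int|\widehat{\psi_1}|\,|\widehat{\psi_2}|\,\diff\xi \neq 0$, precluding orthogonality inside the ground-state eigenspace. Finally, for part (b), the relation $\overline{\widehat{\psi}(\xi)} = e^{2\ii\theta}\widehat{\psi}(\xi)$ Fourier-inverts to $\psi(-x) = e^{-2\ii\theta}\overline{\psi(x)}$, i.e.\ the claimed even symmetry modulo a constant phase; if in addition $p$ is even, then $V$ is even (since $\widehat{V}$ is real) and $\psi(-\cdot)$ solves the same equation, so by uniqueness $\psi(-x) = c\,\psi(x)$ for a constant $c$, which combined with the previous parity relation forces $\arg\psi$ to be constant and hence $\psi$ to be real up to a global phase.
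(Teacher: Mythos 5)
Your overall strategy is the paper's: symmetrize via $\psi\mapsto\psi^{\sharp}=\mathcal{F}^{-1}(|\mathcal{F}\psi|)$, show the potential term can only decrease, and extract the constant phase from the equality case; the positivity, uniqueness-via-non-orthogonality, and part (b) arguments are all sound (the paper gets real-valuedness for even $p$ from the fact that $H$ is then a real operator, but your parity-plus-uniqueness route works too). The one step you single out as the main obstacle is, however, both misattributed and handled differently in the paper. Lemma \ref{lem:HLM} (the Hardy--Littlewood majorant rigidity from \cite{LeSo-18}) is a statement about equality of $L^p$-norms for even $p>2$ and requires connectedness of $\{|\widehat f|>0\}$, which you never verify; it is used in this paper only for the nonlinear Theorem \ref{thm:main}, not here. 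Fortunately you do not need it: your equality case gives $a(\zeta)a(\eta)\bigl[1-\cos(\alpha(\zeta)-\alpha(\eta))\bigr]=0$ for a.e.\ $(\zeta,\eta)$ (since $|\widehat V|>0$ a.e.), i.e.\ $e^{\ii\alpha(\zeta)}=e^{\ii\alpha(\eta)}$ for a.e.\ pair in $S\times S$ with $S=\{a>0\}$, and a single application of Fubini (fix one good $\eta_0\in S$) yields a constant phase a.e.\ on $S$ with no topological input whatsoever. The paper's Proposition \ref{prop:phase_lin} takes the reverse order --- it first proves $|\widehat\psi|>0$ \emph{everywhere} by applying the eigenvalue equation to $\psi^{\bullet}$, then lifts a continuous phase $\vartheta$ and uses connectedness of $\R^n\times\R^n$ --- which buys the pointwise (rather than a.e.) statement directly; your order (phase a.e.\ first, then upgrade to everywhere via the continuity of $(-\widehat V)\ast|\widehat\psi|$ in the eigenvalue equation) reaches the same endpoint. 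So: replace the appeal to \cite{LeSo-18} by the elementary Fubini argument, and the proof is complete.
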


\begin{remarks*}
{\em
1) Under some technical assumptions, we could also treat the non-generic case when $E_0 = \inf_{\xi \in \R^n}p(\xi) = \inf \sigma_{\mathrm{ess}}(H)$ coincides with the bottom of the essential spectrum of $H=P(D)+V$. However, we omit this discussion here.

2) Note that $V \in L^\infty$ by our assumption that $\widehat{V} \in L^1(\R^n)$. As mentioned above, we could relax our conditions to unbounded potentials $V$. But again in order to keep our focus on its simple main argument, we refrain from considering more general cases here.

3) In some sense, the result above yields a Perron-Frobenius type result (i.\,e. positivity and uniqueness of ground states) but when viewed in Fourier space. Of course, the ground state $\psi(x)$ may fail to be real-valued at all (let alone strictly positive) in $x$-space. In fact, a simple example arises in the linearized problem for traveling solitary waves for dispersion-generalized NLS, e.\,g., the linear ground state of $\psi \in H^s(\R^n)$ for equations of the form
$$
((-\Delta)^s + \ii v \cdot \nabla + V) \psi = E \psi
$$
with $s \geq 1/2$ and $v \in \R^n \setminus \{ 0 \}$ (and $|v| < 1$ when $s=1/2$). It is easy to see that any non-trivial solutions $\psi \in H^s(\R^n)$ must be complex-valued due to the presence of the `boost term' $\ii v \cdot \nabla$. However, the result above shows that (under suitable assumptions on $V$), we always have the strict positivity $e^{\ii \theta} \widehat{\psi}(\xi) > 0$ for all $\xi \in \R^n$.

4) If we additionally assume that $\widehat{\psi} \in L^1(\R^n)$ or, more generally, that $\widehat{\psi}$ is a finite positive measure on $\R^n$, then $\psi : \R^n \to \C$ is a positive definite function in the sense of Bochner. See also below.

5) Notice since $\widehat{V}$ and $V$ are both assumed to be real-valued, the potential $V(-x) = V(x)$ is an even function.
}
\end{remarks*}

\subsection{Nonlinear Results}

We now turn to ground state solutions of nonlinear elliptic PDEs in $\R^n$ with pseudo-differential operators $P(D)$ of arbitrary order. As before, let $s >0$ be a real number.  We consider solutions $Q \in H^s(\R^n)$ of nonlinear elliptic PDEs of the form
\be \label{eq:Q}
P(D) Q + \lambda Q - |Q|^{2\sigma} Q  = 0.
\ee
Here $\sigma > 0$ is a given number, which we later assume to be an integer, and $\lambda \in \R$ denotes a given parameter, which plays the role of a nonlinear eigenvalue. We opted to use the letters $Q$ and $\lambda$ instead of $\psi$ and $E$ above in order to  make a clear distinction between the linear and nonlinear problems considered here.

As before, we suppose that $P(D)$ denotes a pseudo-differential operator with constant coefficients defined in Fourier space as
\be
\widehat{(P(D)u)}(\xi) = p(\xi) \widehat{u}(\xi).
\ee
For the nonlinear problem \eqref{eq:Q}, we now impose the following conditions on $P(D)$, where $S^m_{1,0}$ with $m \in \R$ denotes the usual H\"ormander class of symbols for pseudo-differential operators on $\R^n$.

\begin{assumption} \label{ass:1}
Let $s > 0$ be a real number. We suppose that $P(D)$ is a pseudo-differential operator of order $2s$ having a symbol $p \in S^{2s}_{1,0}$ that satisfies the following conditions.
\begin{enumerate}
\item[$(i)$] \textbf{Real-Valuedness:} The symbol $p : \R^n \to \R$ is real-valued.
\item[$(ii)$]\textbf{Ellipticity Condition:} There exist constants $c > 0$ and $R > 0$ such that
$$
p(\xi) \geq c |\xi|^{2s} \quad  \mbox{for} \quad  |\xi| \geq R.
$$ 
\end{enumerate}
\end{assumption}

For the rest of this subsection, we will always assume that $P(D)$ satisfies Assumption \ref{ass:1}. As a consequence, the operator $P(D)=P(D)^*$ is self-adjoint and bounded below on $L^2(\R^n)$ with operator domain $H^{2s}(\R^n)$. Furthermore, we assume the eigenvalue parameter $\lambda \in \R$ in \eqref{eq:Q} satisfies the condition
\be \label{ineq:lamb}
{-\lambda} < {\inf_{\xi \in \R^n}} p(\xi),
\ee
which is equivalent to saying that $-\lambda$ lies strictly below the essential spectrum $\sigma_{\mathrm{ess}}(P(D))$ (in analogy to the condition on $E_0$ in Theorem \ref{thm:lin} above). As a direct consequence, we obtain the norm equivalence
$$
\langle f, (P(D) + \lambda) f \rangle \simeq \| f \|_{H^{s}}^2 ,
$$
where $\langle f, g \rangle = \int_{\R^n} \overline{f} g$ denotes the standard scalar product on $L^2(\R^n)$. Likewise, we introduce the critical exponent $\sigma_*(n,s)$ (which is not necessarily an integer) given by
$$
\sigma_*(n,s) = \begin{dcases*} \frac{2s}{n-2s} & for $s < \frac{n}{2}$, \\ +\infty & for $s \geq \frac{n}{2}$. \end{dcases*}
$$
Thus exponents $\sigma < \sigma_*(n,s)$ correspond to the $H^s$-{\em subcritical case}, which is the situation we shall consider in this paper\footnote{To avoid technicalities, we shall omit the discussion of the critical case $\sigma = \sigma_*(n,s)$ in this paper.}. Note that we have the Sobolev-type inequality
\be \label{ineq:sob}
\| f \|_{L^{2 \sigma+2}}^2 \leq C \langle f, (P(D) + \lambda) f \rangle
\ee
for any $f \in H^s(\R^n)$, where $C> 0$ denotes a suitable constant. Due to the subcriticality $\sigma < \sigma_*(n,s)$, standard variational methods yield existence of an optimal constant $C> 0$ as well as the existence of optimizers $Q \in H^s(\R^n)$ for \eqref{ineq:sob}, which are easily seen to solve \eqref{eq:Q} after a suitable rescaling $Q \mapsto \alpha Q$ with some constant $\alpha$. In fact, we relate this  fact to our definition of ground state solutions for \eqref{eq:Q} as follows.

\begin{definition}
With the notation and assumptions above, we say that $Q \in H^s(\R^n) \setminus \{ 0 \}$ is a \textbf{ground state solution}  if $Q$ solves equation \eqref{eq:Q} and optimizes inequality \eqref{ineq:sob}. 
\end{definition}

Equivalently, as shown in Lemma \ref{lem:Qequiv} below, we obtain that $Q \in H^s(\R^n) \setminus \{ 0 \}$ is a ground state solution for \eqref{eq:Q} if  and only if $Q$ minimizes the action functional
\be
\mathcal{A}(f) = \frac 1 2 \langle f, (P(D)+\lambda) f \rangle - \frac{1}{2 \sigma+2} \| f \|_{L^{2 \sigma+2}}^{2 \sigma +2}
\ee
among all  its non-trivial critical points. Thus the set of ground state solutions is given by
\be \label{def:G}
\mathcal{G} = \{ Q \in K : \mbox{$\mathcal{A}(Q) \leq \mathcal{A}(R)$ for all $R \in K$}\},
\ee
where $K= \{ u \in H^s(\R^n) \setminus \{ 0 \} : \mathcal{A}'(u) = 0 \}$.

We now turn to the question of {\em symmetries} for ground states solutions for \eqref{eq:Q}. As consequence of the real-valuedness of the symbol $p(\xi)$, we notice the reflection-conjugation property
\be
(P(D)f)(-x) = \overline{(P(D)f)(x)}.
\ee
Based on this observation, we may ask whether all ground state solutions $Q$ `inherit' this symmetry property by their variational characterization. In fact, we will prove the following result in this paper when the exponent $\sigma \in \N$ is an integer.

\begin{theorem}[Symmetry for Nonlinear Ground States] \label{thm:main}
Let $n \geq 1$, $s > 0$, and $\sigma \in \N$ with $1 \leq \sigma < \sigma_*(s,n)$. Suppose $Q \in H^{s}(\R^n) \setminus \{ 0\}$ is a ground state solution of \eqref{eq:Q} where $\lambda \in \R$ satisfies \eqref{ineq:lamb}. Finally, we assume that $e^{a |\cdot|} Q \in L^2(\R^n)$ for some $a > 0$. Then it holds that 
$$
Q(x) = e^{\ii \alpha} Q^\bullet(x+x_0)
$$
with some constants $\alpha \in \R$ and $x_0 \in \R^n$.  Here $Q^\bullet : \R^d \to \C$ is a smooth, bounded, and positive definite function in the sense of Bochner. As a consequence, it holds that
$$
Q^\bullet(-x) = \overline{Q^\bullet(x)} \quad \mbox{and} \quad Q^\bullet(0) \geq |Q^\bullet(x)| \quad \mbox{for all $x \in \R^n$}.
$$

If, in addition, the operator $P(D)$ has an even symbol $p(\xi) = p(-\xi)$, the function $Q^\bullet$ must be real-valued (up to a trivial constant complex phase). Consequently, any ground state $Q$ for \eqref{eq:Q} is real and even, i.\,e., we have $Q(-x) =Q(x)$ for all $x \in \R^n$.
\end{theorem}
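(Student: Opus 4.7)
The strategy is to attach to the given ground state $Q$ a Fourier-symmetrized companion $Q^\bullet \in H^s(\R^n)$ with nonnegative Fourier transform $\widehat{Q^\bullet}(\xi) := |\widehat{Q}(\xi)|$, to verify that $Q^\bullet$ is competitive with $Q$ in the variational characterization of ground states, and then to invoke the rigidity theorem of \cite{LeSo-18} for the Hardy--Littlewood majorant problem in $\R^n$. This rigidity will force $Q$ to differ from $Q^\bullet$ only by a constant phase and a translation.

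Because $p$ is real-valued by Assumption~\ref{ass:1}(i), the quadratic form
$$\langle f, (P(D)+\lambda) f\rangle = \int_{\R^n}(p(\xi)+\lambda)|\widehat{f}(\xi)|^2\,d\xi$$
depends only on $|\widehat{f}|$; in particular $\langle Q^\bullet, (P(D)+\lambda) Q^\bullet\rangle = \langle Q, (P(D)+\lambda) Q\rangle$. The integer hypothesis $\sigma\in\N$ enters decisively in the nonlinear part: writing $k := \sigma+1 \in \N$, Plancherel gives $\|f\|_{L^{2k}}^{2k} = \|f^k\|_{L^2}^2 = \|\widehat{f^k}\|_{L^2}^2$, while $\widehat{f^k}$ equals (up to normalization) the $k$-fold convolution $\widehat{f}\ast\cdots\ast\widehat{f}$. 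The elementary pointwise estimate $|\widehat{Q}\ast\cdots\ast\widehat{Q}(\xi)| \leq |\widehat{Q}|\ast\cdots\ast|\widehat{Q}|(\xi) = \widehat{Q^\bullet}\ast\cdots\ast\widehat{Q^\bullet}(\xi)$ then yields the Hardy--Littlewood majorant inequality
$$\|Q\|_{L^{2\sigma+2}} \leq \|Q^\bullet\|_{L^{2\sigma+2}}.$$

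Combining the two facts, the Weinstein-type ratio $J(f) := \langle f,(P(D)+\lambda)f\rangle^{\sigma+1}/\|f\|_{L^{2\sigma+2}}^{2\sigma+2}$ satisfies $J(Q^\bullet) \leq J(Q)$. By Lemma~\ref{lem:Qequiv}, any ground state $Q$ minimizes $J$ (equivalently, minimizes the action functional $\mathcal{A}$ on the set of nontrivial critical points, or the mountain-pass level along each ray from the origin); hence $J(Q^\bullet) = J(Q)$, which forces equality in the Hardy--Littlewood step. The rigidity theorem of \cite{LeSo-18} now asserts that equality can only hold if the phase of $\widehat{Q}$ is an affine function of $\xi$, so that $\widehat{Q}(\xi) = e^{\ii(\alpha+x_0\cdot\xi)}\widehat{Q^\bullet}(\xi)$ for some constants $\alpha\in\R$ and $x_0 \in \R^n$. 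This is precisely the desired identity $Q(x) = e^{\ii\alpha}Q^\bullet(x+x_0)$.

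To extract the Bochner properties, one combines the exponential-decay hypothesis $e^{a|\cdot|}Q\in L^2$ with elliptic regularity for $P(D)$ applied to~\eqref{eq:Q} to ensure that $\widehat{Q^\bullet}\geq 0$ is a bounded, integrable function on $\R^n$; Bochner's theorem then implies that $Q^\bullet$ is bounded, continuous, and positive definite, so that $Q^\bullet(-x) = \overline{Q^\bullet(x)}$ and $|Q^\bullet(x)|\leq Q^\bullet(0)$. When in addition $p(-\xi) = p(\xi)$, the equation~\eqref{eq:Q} enjoys the extra symmetries $Q\mapsto Q(-\cdot)$ and $Q\mapsto \overline{Q}$; a short argument comparing the canonical forms delivered by the preceding paragraphs for $Q$ and its symmetry images, together with translation rigidity in $L^2(\R^n)$, shows that $\widehat{Q^\bullet}$ must itself be an even function, whence $Q^\bullet$ is real-valued. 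The central difficulty of the proof lies in the rigidity step based on \cite{LeSo-18}, without which one cannot pass from equality in the Hardy--Littlewood inequality to the affine-phase conclusion; the variational reduction, the Bochner step, and the even-symbol refinement are essentially book-keeping, with the exponential-decay assumption on $Q$ serving the purely technical role of guaranteeing enough regularity of $\widehat{Q^\bullet}$ to legitimize Bochner's theorem.
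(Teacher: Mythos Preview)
Your overall architecture---pass to $Q^\bullet$, use the Hardy--Littlewood majorant inequality for $L^{2\sigma+2}$ with $\sigma\in\N$, deduce equality from the variational characterization, then invoke the rigidity result of \cite{LeSo-18}---matches the paper. However, there is a genuine gap at the rigidity step.

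The rigidity lemma from \cite{LeSo-18} (stated in the paper as Lemma~\ref{lem:HLM}) has two hypotheses beyond the majorant condition: $\widehat{Q}$ must be continuous, and the set $\Omega=\{\xi:|\widehat{Q}(\xi)|>0\}$ must be \emph{connected}. You do not address the connectedness hypothesis at all, and without it the conclusion $\widehat{Q}(\xi)=e^{\ii(\alpha+\beta\cdot\xi)}|\widehat{Q}(\xi)|$ simply does not follow (counterexamples exist when $\Omega$ is disconnected). The paper devotes Lemma~\ref{lem:omega} to proving that in fact $\Omega=\R^n$: one first shows $Q^\bullet$ is itself a ground state and hence solves \eqref{eq:Q}, so on the Fourier side $\widehat{Q^\bullet}=(p+\lambda)^{-1}\,\widehat{Q^\bullet}\ast\cdots\ast\widehat{Q^\bullet}$ with $2\sigma+1$ factors; by Lemma~\ref{lem:convo} this forces $\Omega$ to equal its own $(2\sigma+1)$-fold Minkowski sum. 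One then argues $0\in\Omega$---and this is exactly where the exponential decay enters: $e^{a|\cdot|}Q\in L^2$ makes $\widehat{Q}$ real-analytic, so if $0\notin\Omega$ one can manufacture a nontrivial product that must vanish identically, contradicting analyticity. From $0\in\Omega$ and the Minkowski-sum invariance one bootstraps to $\Omega=\R^n$. Your proposal misidentifies the role of the exponential decay as ``purely technical'' input for Bochner's theorem; in reality it is the engine behind connectedness, without which the rigidity theorem cannot be applied.

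A smaller point: your treatment of the even-symbol case (``a short argument comparing the canonical forms\ldots together with translation rigidity'') is too vague to assess. The symmetries $Q\mapsto\overline{Q}$ and $Q\mapsto Q(-\cdot)$ applied to the canonical form do yield relations, but they do not obviously force $|\widehat{Q}|$ to be even. The paper instead uses a decomposition trick (Lemma~\ref{lem:trick}, following \cite{FrLiSa-16}): writing $Q=Q_R+\ii Q_I$ and exploiting strict convexity of the $L^{\sigma+1}$-norm to get $Q_I^2=\alpha^2 Q_R^2$, then repeating with a rotated decomposition $Q=e^{\ii\pi/4}Q_a+\ii e^{\ii\pi/4}Q_b$ to pin down the sign and conclude $Q_I=\pm\alpha Q_R$.
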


\begin{remarks*}{\em
1) In Theorem \ref{thm:exp} below, we shall give an analyticity condition on $P(D)$ that ensures the exponential decay property $e^{a |\cdot|} Q \in L^2(\R^n)$ for some $a > 0$.  In particular, it applies to operators of the form
$$
P(D) = c_{k} (-\Delta)^{k} + \sum_{\alpha \in \N^n, |\alpha| \leq m/2-1} c_\alpha (-\ii \partial_x)^{\alpha}
$$
with positive $c_k > 0$, $k \geq 1$, and real arbitrary coefficients $c_\alpha \in \R$. For example, we could take $P(D) = \Delta^2 - \mu \Delta$ with any $\mu \in \R$. Another important class is given by the pseudo-differential operators
$$
P(D) = (1-\Delta)^{s} \quad \mbox{for any $s >0$}.
$$

2) The proof of Theorem \ref{thm:main} will be based on the recent characterization \cite{LeSo-18} of the case of equality in \textbf{Hardy-Littlewood majorant problem} in $\R^n$. Here the topological property that the set $\Omega = \{ \xi \in \R^n : |\widehat{Q}(\xi)| > 0 \}$ is connected in $\R^n$ will enter in an essential way.

3) The function $Q^\bullet: \R^n \to \C$ will be obtained by taking the absolute value on the Fourier side, i.\,e., we set $Q^\bullet = \mathcal{F}^{-1} ( |\mathcal{F} Q|)$. See Section \ref{sec:prelim} for more details.

4) If the symbol $p=p(|\xi|)$ is radially symmetric and strictly increasing in $|\xi|$, then we actually can show that $Q = Q^\sharp$ holds (up to tranlation and complex phase), where $Q^\sharp$ denotes the symmetric-decreasing Fourier rearrangement of $Q$. See \cite{LeSo-18}. For symbols $p$ with cylindrical symmetry, we refer to \cite{BuLeScSo-20}.
}
\end{remarks*}

Next, we turn to the question whether (not necessarily ground state) solutions $Q \in H^s(\R^n)$ of \eqref{eq:Q} satisfy the exponential decay estimate that $e^{a |\cdot|} Q \in L^2(\R^n)$ for some $a > 0$, which is a condition imposed in Theorem \ref{thm:main} above. In fact, we can adapt an analytic continuation argument originally developed to study exponential decay of eigenfunctions of Schr\"odinger operators due to Combes and Thomas \cite{CoTh-73}, building upon O'Connors work \cite{OCon-73}. Here is a list of sufficient conditions on $P(D)$ to carry out such an argument in our case.

\begin{assumption} \label{ass:2}
Suppose $P(D)$ has a symbol $p(\xi)$ which has an analytic continuation to the strip $T_\delta = \{ z \in \C^n : |\mathrm{Im} \, z| < \delta \}$ with some $\delta > 0$. Moreover, we assume the following conditions.
\begin{enumerate}
\item[$(i)$] For each $\kappa \in T_\delta$, there exist constant $\gamma \in \R$ and $\theta \in [0, \pi/2)$ such that
$$
\left |\arg (p(\xi+\kappa) - \gamma) \right | \leq \theta \quad \mbox{for all} \quad \xi \in \R^n.
$$
\item[$(ii)$] For each $\kappa \in T_\delta$, there exist constants $a_1, a_2 >0$ and $b_1, b_2 \in \R$ such that
$$
a_1 |\xi|^{2s} - b_1 \leq \mathrm{Re}(p(\xi + \kappa)) \leq a_2 |\xi|^{2s} + b_2 \quad \mbox{for all} \quad \xi \in \R^n.
$$
\end{enumerate}
\end{assumption}

\begin{remark*}
{\em 
It is elementary to check that any polynomial $p(\xi) = \sum_{|\alpha| \leq m} c_\alpha \xi^{\alpha}$ with coefficients $c_\alpha \in \R$ and $\inf_{\xi \in \R^n} p(\xi) > -\infty$ satisfies the above conditions (with $m=2s$). In particular, operators of the form
$$
P(D) = \Delta^2 - \mu \Delta + \ii v \cdot \nabla \quad \mbox{with} \quad \mu \in \R, v \in \R^n
$$
fall under the scope of Assumption \ref{ass:2}. Also, one can verify that the same is true for operators $P(D) = (1-\Delta)^{s}$ with $s> 0$.
}
\end{remark*}

We can now state the following result, which established the assumed exponential decay $e^{a |\cdot|} Q \in L^2(\R^n)$ for some $a > 0$ appearing in Theorem \ref{thm:main} above. 

\begin{theorem}[Exponential Decay] \label{thm:exp}
Let $n,s$, and $\sigma$ be as in Theorem \ref{thm:main}. If $P(D)$ satisfies Assumption \ref{ass:2}, then any solution $Q \in H^s(\R^n)$ of \eqref{eq:Q} satisfies $e^{a |\cdot|} Q \in L^2(\R^n)$ for some $a > 0$. As a consequence, the conclusions of Theorem \ref{thm:main} hold true.
\end{theorem}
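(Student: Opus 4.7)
I would recast \eqref{eq:Q} as the linear eigenvalue problem
\[
(P(D) + \lambda - V) Q = 0, \qquad V(x) := |Q(x)|^{2\sigma},
\]
and then carry out an analytic dilation (Combes--Thomas) argument along the lines of \cite{CoTh-73,OCon-73}, with the local-resolvent estimates there replaced by the strip-analyticity and sectoriality of $p(\xi)$ provided by Assumption \ref{ass:2}. The preparatory step is to show that the self-induced potential $V$ decays at spatial infinity. Since $\sigma \in \N$, iterating the identity $Q = (P(D)+\lambda)^{-1}(|Q|^{2\sigma}Q)$ together with the ellipticity $p(\xi) + \lambda \gtrsim \langle\xi\rangle^{2s}$ at large $|\xi|$ (contained in Assumption \ref{ass:2}(ii) with $\kappa = 0$) and the standard algebra properties of Sobolev spaces gives $Q \in H^k(\R^n)$ for every $k \in \N$; in particular $Q$ is smooth, bounded, and vanishes at infinity, so $V \in L^\infty(\R^n)$ with $\|V\|_{L^\infty(|x| > R)} \to 0$ as $R\to\infty$.

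Fix next $\eta\in\R^n$ with $|\eta|<\delta$ and set $Q_\eta(x) := e^{\eta\cdot x} Q(x)$ (a priori only a measurable function). A direct computation shows that $e^{\eta\cdot x} P(D) e^{-\eta\cdot x}$ is the Fourier multiplier $P_\eta(D)$ with symbol $p(\xi + \ii\eta)$, holomorphic in $\eta$ on the strip $T_\delta$ by Assumption \ref{ass:2}. The sectorial bound (i) applied with $\kappa = \ii\eta$ confines $p(\xi+\ii\eta) - \gamma$ to a closed sector of half-opening $\theta<\pi/2$, which together with the invertibility $\inf_{\xi}(p(\xi)+\lambda) > 0$ of the unshifted problem (coming from \eqref{ineq:lamb}) and continuity in $\eta$ forces
\[
\inf_{\xi\in\R^n}|p(\xi+\ii\eta)+\lambda|\geq c(\eta)>0
\]
for $|\eta|$ small. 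Thus $P_\eta(D)+\lambda$ is invertible on $L^2(\R^n)$ with operator norm $M_\eta := c(\eta)^{-1}$, and conjugating the identity $Q=(P(D)+\lambda)^{-1}(VQ)$ by $e^{\eta\cdot x}$ produces formally
\[
Q_\eta = (P_\eta(D)+\lambda)^{-1}(V Q_\eta).
\]
Splitting $V = V\mathbf{1}_{|x|\leq R} + V\mathbf{1}_{|x|>R}$ and estimating in $L^2$ gives
\[
\|Q_\eta\|_{L^2} \leq M_\eta\|V\|_{L^\infty(|x|>R)}\|Q_\eta\|_{L^2} + M_\eta e^{|\eta|R}\|V\|_{L^\infty}\|Q\|_{L^2},
\]
and choosing $R$ so that $M_\eta\|V\|_{L^\infty(|x|>R)}\leq 1/2$ absorbs the first term and yields $\|Q_\eta\|_{L^2} \lesssim e^{|\eta|R}<\infty$.

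The preceding estimate is only formal because $Q_\eta \in L^2$ is not known a priori. To make it rigorous I would replace $e^{\eta\cdot x}$ by a smooth truncated weight $w_N$ which agrees with $e^{\eta\cdot x}$ on the ball of radius $N$ and is globally bounded, rerun the splitting-and-absorption argument with $w_N Q \in L^2$ in place of $Q_\eta$, and control the commutator $[P(D),w_N]Q$ uniformly in $N$ using the symbol estimates of Assumption \ref{ass:2} together with the smoothness of $w_N$. Monotone convergence as $N\to\infty$ then yields $\|Q_\eta\|_{L^2} < \infty$ for each $|\eta|<\delta$. Applying this with $\eta = a\epsilon$ for $\epsilon\in\{-1,+1\}^n$ and $a<\delta/\sqrt n$, and combining with the elementary pointwise bound $e^{a|x|} \leq \sum_{\epsilon\in\{-1,+1\}^n} e^{a\epsilon\cdot x}$ (which follows from $|x|\leq \sum_j|x_j|$), we conclude $e^{a|\cdot|}Q \in L^2(\R^n)$, and Theorem \ref{thm:main} then applies directly. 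I expect the decisive difficulty to be precisely this rigorous justification of the exponential conjugation: unlike in the classical local setting, the commutator $[P(D),w_N]$ has no convenient closed form for a genuinely nonlocal $P(D)$, so that $N$-uniform control requires essentially the full Fourier-space information encoded in Assumption \ref{ass:2}.
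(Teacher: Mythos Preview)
Your argument is a correct variant of the Combes--Thomas method, but it proceeds along a genuinely different route from the paper's. The paper does \emph{not} run a direct resolvent absorption estimate; instead it frames $H(\kappa)=U(\kappa)(P(D)+V)U(\kappa)^{-1}$, with $U(\kappa)f=e^{2\pi\ii\kappa\cdot x}f$, as an \emph{analytic family of type (B)} on the strip $T_\delta$ (this is where the sectoriality in Assumption~\ref{ass:2}(i) and the two-sided bounds in (ii) are used, to verify strict $m$-sectoriality and closedness of the associated quadratic forms). Since the operators $H(\kappa)$ for real $\kappa$ are all unitarily equivalent, the discrete eigenvalue $-\lambda$ of $H(0)$ persists as a discrete eigenvalue of $H(\kappa)$ throughout a complex neighbourhood, and the associated Riesz projections $P(\kappa)$ vary analytically. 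O'Connor's lemma then gives that $\kappa\mapsto U(\kappa)Q$ extends analytically, and Paley--Wiener converts this into $e^{a|\cdot|}Q\in L^2$.

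The trade-off is exactly the one you anticipated in your last paragraph. Your resolvent approach is more elementary in that it avoids the Kato--Reed--Simon machinery of analytic families, and it uses only the invertibility of the \emph{free} shifted operator $P_\eta(D)+\lambda$ together with the decay of $V$. The price is the rigorous justification step: controlling $[P(D),w_N]$ (or equivalently the $x$-dependent symbol of $w_N P(D) w_N^{-1}$) uniformly in $N$ for a genuinely nonlocal $P(D)$ requires a pseudodifferential-calculus argument that Assumption~\ref{ass:2} supports but does not make trivial. The paper's route is designed precisely to bypass this: by working with the spectral projection of the \emph{full} operator $H(\kappa)$ and invoking O'Connor's lemma, no commutator or truncated-weight analysis is ever needed, and the exponential decay drops out of abstract analytic perturbation theory. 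Both arguments are sound; the paper's is shorter and more robust, yours is more self-contained.
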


\begin{remark*}
{\em For an in-depth analysis of exponential decay of eigenfunctions of $P(D)+V$ with polynomial symbol $p(\xi)$, we refer to the recent work \cite{HeSk-15}. However, for our purposes here, it is sufficient to obtain a `coarse' exponential decay estimate saying that $e^{a |\cdot|} Q \in L^2(\R^n)$ for some $a>0$.}
\end{remark*}

\subsection{Strategy of the Proofs}
Let us briefly describe the strategy behind the proofs of our main results. The idea to prove Theorems \ref{thm:lin} and \ref{thm:main} is based on taking absolute values of the Fourier transform. That is, for a given function $f \in L^2(\R^n)$, we define 
\be
f^\bullet = \mathcal{F}^{-1} ( |\mathcal{F} f|).
\ee
By Plancherel's identity, we immediately find that $\|f^\bullet\|_{L^2} = \| f \|_{L^2}$ and $\langle f^\bullet, P(D) f^\bullet \rangle = \langle f, P(D) f \rangle$. Moreover, for potentials $V : \R^n \to \R$ as in Theorem \ref{thm:lin} as well as for integers $\sigma \in \N$ with $1 \leq \sigma < \sigma_*(s,n)$, we readily obtain the inequalities\footnote{See also the remark following Lemma \ref{lem:element} for the case of non-integer $\sigma$.}
\be
\langle f^\bullet, V f^\bullet \rangle \leq \langle f, V f \rangle \quad \mbox{and} \quad \| f \|_{L^{2 \sigma+2}} \leq \| f^\bullet \|_{L^{2 \sigma +2}}
\ee  
for any $f \in H^s(\R^n)$. Thus if $\psi \in H^s(\R^n)$ and $Q \in H^s(\R^n)$ are ground states for \eqref{eq:lin} and \eqref{eq:Q}, respectively, so are the functions $\psi^\bullet$ and $Q^\bullet$. Therefore, the conclusions of Theorems \ref{thm:lin} and \ref{thm:main} will follow once we can show that 
\be
\widehat{\psi}(\xi) = e^{\ii \theta} |\widehat{\psi}(\xi)| \quad \mbox{and} \quad \widehat{Q}(\xi) = e^{\ii (\alpha + \beta \cdot \xi)} |\widehat{Q}(\xi)|
\ee
with some constants $\theta, \alpha \in \R$ and $\beta \in \R^n$. We remark that $\widehat{\psi}$ and $\widehat{Q}$ are easily seen to be continuous functions in our setting.

 In terms of harmonic analysis, we are faced to solve a {\em phase retrieval problem}, i.\,e., given the modulus of the Fourier transform of a function, we try reconstruct its phase by exploiting some additional facts. For the linear problem \eqref{eq:lin}, this is an elementary task provided that the potential $V$ satisfies the hypothesis of Theorem \ref{thm:lin}. Not surprisingly, the nonlinear problem \eqref{eq:Q} is harder to analyze. Here, a rigidity result for the so-called {\em Hardy--Littlewood majorant problem} in $\R^n$ (recently obtained in \cite{LeSo-18}) enters in an essential way; see also Lemma \ref{lem:HLM} below. In order to apply this result, we must verify the topological property that 
\be
\Omega = \{ \xi \in \R^n : |\widehat{Q}(\xi)| > 0 \}
\ee
is a {\em connected} set in $\R^n$. To prove this fact (where indeed we show that $\Omega = \R^n$ holds in our case), we will make use of analyticity argument: By standard Payler--Wiener arguments, the exponential decay $e^{a |\cdot|} Q \in L^2(\R^n)$ for some $a> 0$ will ensure that $\widehat{Q}(\xi)$ is analytic in some complex strip around $\R^n$. The analyticity of $\widehat{Q}$ together with the fact $Q$ solves \eqref{eq:Q} will then yield the desired result.

Finally, we recall from above that the  proof of Theorem \ref{thm:exp} is based on a strategy for deriving exponential decay for $N$-body Schr\"odinger operators due to Combes and Thomas \cite{CoTh-73} based on O'Connor's lemma \cite{OCon-73}. 

\subsection*{Acknowledgments} The authors are grateful to the Swiss National Science Foundation (SNSF) for financial support under Grant No.~20021-169464. E.\,L.~also thanks the Mittag--Leffler Institute for its kind hospitality during a stay in March 2019, where parts of this work were done. Moreover, the authors are also grateful to Tobias Weth for valuable comments and to J\'er\^ome Hilken for his careful proofreading. Finally, E.\,L.~thanks Rowan Killip for drawing his attention to the work in \cite{Bo-62}.

\section{Preliminaries} \label{sec:prelim}

\subsection{Fourier Inequalities and Hardy-Littlewood Majorant Problem in $\R^n$}

For a function $f \in L^1(\R^n)$, we define its Fourier transform by
\be
(\mathcal{F} f)(\xi) \equiv \widehat{f}(\xi) = \int_{\R^n} f(x) e^{-2 \pi \ii x \cdot \xi} \, d x,
\ee
with the usual extension to $f \in L^2(\R^n)$ by density. For $f  \in L^2(\R^n)$ given, we recall that the function  $f^\bullet \in L^2(\R^n)$ is obtained by taking the absolute value on the Fourier side, i.\,e., we set
\be 
f^\bullet = \mathcal{F}^{-1} (|\mathcal{F} f|).
\ee 
From Plancherel's identity it is clear that $\| f \|_{L^2} = \| f^\bullet \|_{L^2}$ holds. We record some further elementary properties of this operation.

\begin{lemma} \label{lem:element}
Let $n \geq 1$, $s > 0$, and $\sigma \in \N$ with $\sigma < \sigma_*(s,n)$. 

\begin{enumerate}
\item[$(i)$] For any $f \in H^s(\R^n)$, we have
$$
\langle f^\bullet, P(D) f^\bullet \rangle = \langle f, P(D) f \rangle \quad \mbox{and} \quad \|f\|_{L^{2\sigma+2}} \leq \|f^\bullet \|_{L^{2\sigma+2}}.
$$
\item[$(ii)$] For any $f \in L^2(\R^n)$, it holds  that $f^\bullet(-x) = \overline{{f^\bullet(x)}}$ for a.\,e.~$x \in \R^n$.  

\item[$(iii)$] If $f \in L^2(\R^n)$ and $\widehat{f} \in L^1(\R^n)$, then $f^\bullet : \R^n \to \C$ is a continuous and bounded function which is \textbf{positive definite} in the sense that for any points $x_1, \ldots, x_N \in \R^n$ the matrix $[f^\bullet(x_k-x_l)]_{1\leq k,l \leq N}$ is positive semi-definite, i.\,e.,
$$
\sum_{k,l=1}^N f^\bullet(x_k-x_l) \overline{v}_k v_l \geq 0 \quad \mbox{for all $v \in \C^N$}.
$$
In particular, the inequality $f^\bullet(0) \geq |f^\bullet(x)|$ holds for all $x\in \R^n$.
\end{enumerate}
\end{lemma}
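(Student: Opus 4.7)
The three parts are essentially independent, so I would treat them in turn.

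\smallskip

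\emph{Part (i), first identity.} By definition $\widehat{f^{\bullet}}(\xi) = |\widehat{f}(\xi)|$, so Plancherel (together with the fact that $P(D)$ acts as the real multiplier $p(\xi)$ on the Fourier side, cf.~Assumption \ref{ass:lin}) gives
\[
\langle f^{\bullet}, P(D) f^{\bullet}\rangle = \int_{\R^n} p(\xi)\, |\widehat{f^{\bullet}}(\xi)|^{2} \diff\xi = \int_{\R^n} p(\xi)\,|\widehat{f}(\xi)|^{2}\diff\xi = \langle f, P(D) f\rangle.
\]
Since $f\in H^s(\R^n)$ and $p(\xi)\lesssim|\xi|^{2s}+1$, both integrals are finite and the manipulation is legitimate.

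\smallskip

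\emph{Part (i), $L^{2\sigma+2}$-inequality.} This is the step where the integrality of $\sigma$ enters. Writing $|f|^{2\sigma+2}=|f^{\sigma+1}|^{2}$ and applying Plancherel, together with $\widehat{f^{\sigma+1}} = \widehat{f} \ast \cdots \ast \widehat{f}$ ($\sigma+1$ factors), one gets
\[
\|f\|_{L^{2\sigma+2}}^{2\sigma+2} = \bigl\|\underbrace{\widehat{f}\ast\cdots\ast\widehat{f}}_{\sigma+1}\bigr\|_{L^2}^{2}.
\]
Applying this with $f^{\bullet}$ in place of $f$ and using the pointwise triangle inequality for iterated convolutions,
\[
\bigl|(\widehat{f}\ast\cdots\ast\widehat{f})(\xi)\bigr| \leq (|\widehat{f}|\ast\cdots\ast|\widehat{f}|)(\xi)= (\widehat{f^{\bullet}}\ast\cdots\ast\widehat{f^{\bullet}})(\xi),
\]
the claim $\|f\|_{L^{2\sigma+2}} \leq \|f^{\bullet}\|_{L^{2\sigma+2}}$ follows after taking $L^2$-norms and squaring. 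This is precisely the Hardy--Littlewood majorant inequality; the main point to record is that the proof does use $\sigma\in\N$ so that $2\sigma+2$ is an even integer and the convolution representation is available.

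\smallskip

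\emph{Part (ii).} Because $\widehat{f^{\bullet}}=|\widehat{f}|$ is real-valued, I would compute directly: for a.e.\ $x\in\R^n$,
\[
f^{\bullet}(-x) = \int_{\R^n} |\widehat{f}(\xi)|\, e^{-2\pi \ii x\cdot\xi}\diff\xi = \overline{\int_{\R^n} |\widehat{f}(\xi)|\, e^{2\pi \ii x\cdot\xi}\diff\xi} = \overline{f^{\bullet}(x)}.
\]
If $\widehat{f}\notin L^1$ the same identity holds by a density argument (or simply noting that $\overline{f^{\bullet}(x)}=\mathcal{F}^{-1}(\overline{|\widehat{f}|(-\cdot)})(x)=\mathcal{F}^{-1}(|\widehat{f}|)(-x)$).

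\smallskip

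\emph{Part (iii).} Under $\widehat{f}\in L^{1}(\R^n)$, the function $|\widehat{f}|$ is an $L^{1}$, non-negative function, hence defines a finite non-negative Borel measure on $\R^n$. Its inverse Fourier transform $f^{\bullet}$ is therefore continuous and bounded (dominated convergence shows continuity; the sup-bound is $\int |\widehat{f}|$). Positive definiteness is a direct consequence of Bochner's theorem: for any $v\in\C^N$ and $x_1,\dots,x_N\in\R^n$,
\[
\sum_{k,l=1}^{N} f^{\bullet}(x_{k}-x_{l}) \overline{v}_{k} v_{l} = \int_{\R^n} |\widehat{f}(\xi)| \bigg|\sum_{k=1}^{N} v_{k} e^{2\pi \ii x_{k}\cdot\xi}\bigg|^{2} \diff\xi \geq 0.
\]
The pointwise bound $|f^{\bullet}(x)|\leq f^{\bullet}(0)$ is immediate (either from the integral representation or as the standard corollary of positive definiteness).

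\smallskip

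None of the three parts requires a real obstacle. The only conceptual step is the $L^{2\sigma+2}$-inequality in (i), where one must notice that integrality of $\sigma$ reduces the claim to the iterated-convolution form of the Hardy--Littlewood majorant inequality; everything else is Plancherel, direct computation, or Bochner's theorem.
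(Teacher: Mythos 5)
Your proof is correct and follows essentially the same route as the paper: Plancherel for the quadratic form, the even-integer convolution representation of the $L^{2\sigma+2}$-norm plus the pointwise triangle inequality for iterated convolutions (the paper phrases this as evaluating a $(2\sigma+1)$-fold convolution of $\widehat{f}$ and $\widehat{\overline f}$ at $0$ via autocorrelation functions, while you take the $L^2$-norm of the $(\sigma+1)$-fold convolution $\widehat{f^{\sigma+1}}$ — these are the same argument), and Bochner's theorem for (iii), where you in fact supply more detail than the paper does.
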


\begin{remark*}
{\em The inequality $\| f \|_{L^{2\sigma+2}} \leq \| f^\bullet \|_{L^{2 \sigma+2}}$ for integer $\sigma \in \N$ is a consequence of the so-called {\em upper majorant property (UMP)} for $L^p$-norms with $p \in 2 \N \cup \{ \infty \}$. That is, for such $p$ and $f, g \in \mathcal{F}(L^{p'}(\R^n))$ we have the implication
$$
\mbox{$|\widehat{f}(\xi)| \leq \widehat{g}(\xi)$ for a.\,e.~$\xi \in \R^n$} \quad \Longrightarrow \quad \| f \|_{L^p} \leq \| g \|_{L^p}.
$$ 
On the other hand, it is well-known that (UMP) fails for $L^p$-norms when $p \not \in 2 \N \cup \{ \infty \}$. Indeed, the known counterexamples (see e.\,g.~\cite{Li-60,Bo-62, MoSc-09}) show the failure of (UMP) in the torus case, i.\,e., for $L^p(\mathbb{T})$. But these examples can be easily transferred to the real line case as follows. Suppose $p> 2$ is not an even integer. Then, as shown in \cite{MoSc-09}, there exist trigonometric polynomials $q$ and $Q$ with Fourier coefficients $|\widehat{q}(n)| = \widehat{Q}(n)$ for all $n \in \Z$ satisfying $\| q \|_{L^p(\mathbb{T})} > \| Q \|_{L^p(\mathbb{T})}$. We can lift this example to Fourier transform in $\R$ by considering the Schwartz functions
$$
q_\lambda(x) = \lambda^{\frac{1}{2p}} q(x) e^{-\lambda x^2}, \quad Q_\lambda(x) = \lambda^{\frac{1}{2p}} Q(x) e^{-\lambda x^2}  
$$
with $\lambda > 0$. It is elementary to check that $\| q_\lambda \|_{L^p(\R)} \to \| q \|_{L^p(\mathbb{T})}$ and $\| Q_\lambda \|_{L^p(\R)} \to \| Q \|_{L^p(\mathbb{T})}$ as $\lambda \to 0^+$. Furthermore, we readily check for the Fourier transforms $|\widehat{q}_\lambda(\xi)| \leq \widehat{Q}_\lambda(\xi)$ for all $\xi \in \R^n$. Thus by taking $\lambda > 0$ sufficiently small, we see that (UMP) fails for $L^p(\R)$ with non-even integer $p$.
}
\end{remark*}

\begin{proof}
First, it is evident that $\langle f, P(D) f \rangle = \int_{\R^n} p(\xi) |\widehat{f}(\xi)|^2 \,d \xi = \langle f^\bullet, P(D) f^\bullet \rangle$. Next, let $p = 2\sigma +2$ with $\sigma \in\N$ with $\sigma < \sigma_*(s,n)$. By H\"older's inequality, we note that $f \in H^s(\R^n)$ implies that $f \in \mathcal{F}(L^{p'}(\R^n))$, i.\,e.~we have $\widehat{f} \in L^{p'}(\R^n)$, where $p' = \frac{2\sigma+2}{2\sigma+1}$ denotes the dual exponent of $p=2\sigma+2$. Thus we can apply to conclude
$$
\| f\|_{L^{2\sigma+2}}^{2 \sigma + 2} = (\widehat{f} \ast \widehat{\overline{f}} \ast \ldots \ast \widehat{f} \ast \widehat{\overline{f}})(0)
$$
with $2\sigma+1$ convolutions on the right-hand side. With the use of the autocorrelation function
$$
\Psi_{\widehat{f}}(\xi) = (\widehat{f} \ast \widehat{\overline{f}})(\xi) = (\widehat{f} \ast \overline{\widehat{f}(-\cdot)})(\xi) = \int_{\R^n} \widehat{f}(\xi+\xi') \overline{\widehat{f}(\xi')} \, d\xi',
$$
we can write 
$$
\| f \|_{L^{2\sigma+2}}^{2\sigma+2} = (\Psi_{\widehat{f}} \ast \ldots \ast \Psi_{\widehat{f}})(0),
$$
where the number of convolutions is equal to $\sigma$. Since $|\Psi_{\widehat{f}}|(\xi) \leq \Psi_{|\widehat{f}|}(\xi)$, we deduce
$$
\| f \|_{L^{2\sigma+2}}^{2\sigma+2} \leq (\Psi_{|\widehat{f}|} \ast \ldots \ast \Psi_{|\widehat{f}|})(0) = \| f^\bullet \|_{L^{2\sigma+2}}^{2 \sigma + 2},
$$
which completes the proof of item (i).

The proof of (ii) is a direct consequence of the fact that $\widehat{f^\bullet}= |\widehat{f}|$ is real-valued. Furthermore, item (iii) is a classical fact using that $\widehat{f^\bullet}=|\widehat{f}| \geq 0$ is non-negative and assuming that $\widehat{f^\bullet} \in L^1(\R^n)$ (or more generally $\widehat{f^\bullet}$ is a finite measure on $\R^n$); see, e.\,g., for a discussion of positive-definite functions and Bochner's theorem.
\end{proof}

As a next essential fact we recall from \cite{LeSo-18} the following \textbf{rigidity result}.

\begin{lemma}[Equality in the Hardy-Littlewood Majorant Problem in $\R^n$] \label{lem:HLM}
 Let $n \geq 1$ and $p \in 2 \N \cup \{\infty\}$ with $p> 2$. Suppose that $f, g \in \mathcal{F}(L^{p'}(\R^n))$ with $1/p+1/p'=1$ satisfy the majorant condition
$$
|\widehat{f}(\xi)| \leq \widehat{g}(\xi) \quad \mbox{for a.\,e.~$\xi \in \R^n$.}
$$
In addition, we assume that $\widehat{f}$ is continuous and that $\{ \xi\in \R^n : |\widehat{f}(\xi)| > 0 \}$ is a connected set. Then equality
$$
\| f \|_{L^p} = \| g \|_{L^p}
$$
holds if and only if
$$
\widehat{f}(\xi) = e^{\ii (\alpha + \beta \cdot \xi)} \widehat{g}(\xi) \quad  \mbox{for all $\xi \in \R^n$},
$$
with some constants $\alpha \in \R$ and $\beta \in \R^n$.
\end{lemma}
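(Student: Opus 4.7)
The ``if'' direction is immediate: the identity $\widehat{f}(\xi) = e^{\ii(\alpha + \beta\cdot\xi)}\widehat{g}(\xi)$ means that $f$ is a unimodular constant multiple of a translate of $g$, so $\|f\|_{L^p} = \|g\|_{L^p}$. For the ``only if'' direction the case $p=\infty$ is handled directly by evaluating $|f(x_0)| = \|f\|_\infty$ at a maximizer of $|f|$ and saturating $|f(x_0)| \leq \int|\widehat{f}| \leq \int\widehat{g} = g(0) = \|g\|_\infty$. So assume $p = 2(\sigma+1)$ with $\sigma \in \N$, $\sigma \geq 1$. The plan is to recast the $L^p$-norm as an $L^2$-norm on the Fourier side via Parseval,
$$
\|f\|_{L^p}^p = \|\widehat{f^{\sigma+1}}\|_{L^2}^2 = \|\widehat{f}^{*(\sigma+1)}\|_{L^2}^2,
$$
and analogously for $g$. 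Combined with the pointwise chain
$$
|\widehat{f}^{*(\sigma+1)}(\xi)| \leq |\widehat{f}|^{*(\sigma+1)}(\xi) \leq \widehat{g}^{*(\sigma+1)}(\xi),
$$
the hypothesis $\|f\|_{L^p} = \|g\|_{L^p}$ forces both inequalities to collapse to pointwise equalities a.\,e.

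\smallskip

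From the outer equality $|\widehat{f}|^{*(\sigma+1)} = \widehat{g}^{*(\sigma+1)}$ I would take the inverse Fourier transform to obtain $(f^\bullet)^{\sigma+1} = g^{\sigma+1}$, with $f^\bullet = \mathcal{F}^{-1}(|\widehat{f}|)$. Taking absolute values gives $|f^\bullet| = |g|$ a.\,e., hence $\|f^\bullet\|_{L^2} = \|g\|_{L^2}$, and Plancherel rewrites this as $\||\widehat{f}|\|_{L^2} = \|\widehat{g}\|_{L^2}$. Combined with the pointwise majorant $|\widehat{f}| \leq \widehat{g}$, this upgrades to the pointwise identity $|\widehat{f}(\xi)| = \widehat{g}(\xi)$ a.\,e.

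\smallskip

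The core of the argument is the inner equality $|\widehat{f}^{*(\sigma+1)}| = |\widehat{f}|^{*(\sigma+1)}$, which amounts to saturating the triangle inequality inside the convolution integral. Writing $\widehat{f} = |\widehat{f}|\,e^{\ii\theta}$ continuously on the open set $\Omega = \{|\widehat{f}|>0\}$, saturation forces the integrand $\prod_{j=1}^{\sigma+1}\widehat{f}(\xi_j)$ to carry a single $\xi$-dependent phase on almost every fiber $\{\xi_1+\cdots+\xi_{\sigma+1}=\xi\}\cap \Omega^{\sigma+1}$, i.e.\
$$
\theta(\xi_1) + \cdots + \theta(\xi_{\sigma+1}) \equiv \phi(\xi_1+\cdots+\xi_{\sigma+1}) \pmod{2\pi}
$$
for a.\,e.\ tuple in $\Omega^{\sigma+1}$ and a suitable function $\phi$. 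Freezing $\xi_3,\ldots,\xi_{\sigma+1}$ reduces this to the Cauchy--Pexider form
$$
\theta(\xi_1) + \theta(\xi_2) = \theta(\xi_1') + \theta(\xi_2') \quad \text{whenever}\ \xi_1+\xi_2 = \xi_1'+\xi_2'
$$
for quadruples in $\Omega$. Here is where the connectivity of $\Omega$ enters decisively: continuity of $\widehat{f}$ on $\Omega$ permits an unambiguous continuous lift of $\theta$ from $\R/2\pi\Z$ to $\R$, and rewriting the equation as translation-invariance of $h \mapsto \theta(\xi+h)-\theta(\xi)$ leads to Cauchy's additive equation for a continuous increment, forcing $\theta(\xi) = \alpha + \beta\cdot\xi$ on all of $\Omega$ with $\alpha\in\R$, $\beta \in \R^n$. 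On $\Omega^c$ both $\widehat{f}$ and $|\widehat{f}|$ vanish, so $\widehat{f}(\xi) = e^{\ii(\alpha+\beta\cdot\xi)}|\widehat{f}(\xi)|$ holds on all of $\R^n$, and combining with $|\widehat{f}|=\widehat{g}$ concludes the proof. The main obstacle is precisely this global phase reconstruction: without connectivity, independent affine phases could live on distinct components of $\Omega$, and without continuity the $\pmod{2\pi}$ relation could not be lifted to an equation in $\R$.
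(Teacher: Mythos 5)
The paper does not actually prove Lemma~\ref{lem:HLM}: it is imported verbatim from the reference \cite{LeSo-18}, so there is no in-paper argument to compare against. Your reconstruction follows what is essentially the expected (and, as far as the present paper indicates, the actual) route: reduce $\|f\|_{L^p}^p$ to $\|\widehat f^{\,*(\sigma+1)}\|_{L^2}^2$, sandwich via the triangle inequality and the majorant condition, and extract rigidity from the two saturated inequalities. The skeleton is correct, and you correctly isolate where connectedness of $\Omega$ is used.

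Two steps are thinner than they should be. First, in passing from $|f^\bullet|=|g|$ a.e.\ to $|\widehat f|=\widehat g$ a.e.\ you invoke Plancherel, but the hypothesis only gives $\widehat f,\widehat g\in L^{p'}(\R^n)$ with $1<p'<2$, so $L^2$ membership is not available in general. This is repairable without Plancherel: writing $\widehat g=|\widehat f|+h$ with $h\geq 0$ and expanding $\widehat g^{\,*(\sigma+1)}=|\widehat f|^{*(\sigma+1)}$ by multilinearity, every cross term is nonnegative, so in particular $|\widehat f|^{*\sigma}*h=0$ a.e.; Tonelli then gives $\bigl(\int|\widehat f|\bigr)^{\sigma}\int h=0$ (the identity holds in $[0,\infty]$ regardless of integrability), forcing $h=0$ a.e.\ when $f\not\equiv 0$. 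Second, and more seriously as written: a connected open subset of $\R^n$ need not be simply connected, so "an unambiguous continuous lift of $\theta$ from $\R/2\pi\Z$ to $\R$" on $\Omega$ is not automatic from continuity plus connectedness. The fix is to avoid the global lift: work with the $S^1$-valued function $u=\widehat f/|\widehat f|$, derive the Pexider relation $u(\xi_1)u(\xi_2)=u(\xi_1')u(\xi_2')$ for $\xi_1+\xi_2=\xi_1'+\xi_2'$ on $\Omega$ (upgrading from a.e.\ tuples to all tuples by continuity), solve it locally on balls where a lift does exist to get locally affine phases, and then use connectedness to patch the local affine data into a single $\alpha+\beta\cdot\xi$. (In the application in this paper one has $\Omega=\R^n$, so the issue disappears there, but the lemma as stated only assumes connectedness.) With these two repairs your argument goes through.
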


\begin{remark*}{\em
The connectedness of the set $\Omega \subset \R^n$ is essential. See also \cite{LeSo-18} for a counterexample when $\Omega$ is not connected. However, as we will show below, the set $\Omega = \{ \xi \in \R^n : |\widehat{Q}(\xi)| > 0 \}$ will turn out to be connected (in fact, we show $\Omega= \R^n$ holds) for the ground states $Q$ of \eqref{eq:Q} in the setting considered in this paper.
}
\end{remark*}

\subsection{Smoothness and Exponential Decay of $Q$}

Recall that we always suppose that $P(D)$ satisfies Assumptions \ref{ass:1}.

\begin{proposition} \label{prop:Qsmooth}
Let $n \geq 1$, $s > 0$, and $\sigma \in \N$ with $1 \leq \sigma < \sigma_*(n,s)$. Then any solution $Q \in H^s(\R^n)$ satisfies $Q \in H^\infty(\R^n) = \bigcap_{k \geq 0} H^k(\R^n)$. 
\end{proposition}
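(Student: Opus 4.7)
The plan is to rewrite equation \eqref{eq:Q} as the fixed-point identity
$$
Q = (P(D)+\lambda)^{-1}\bigl(|Q|^{2\sigma} Q\bigr)
$$
and to bootstrap regularity in two successive stages: integrability first, Sobolev smoothness second. The key preparation is that, since $p \in S^{2s}_{1,0}$ and the coercivity $\langle f, (P(D)+\lambda)f\rangle \simeq \|f\|_{H^s}^2$ noted in the introduction forces $p(\xi)+\lambda \gtrsim \langle\xi\rangle^{2s}$, the multiplier $m(\xi)=1/(p(\xi)+\lambda)$ lies in $S^{-2s}_{1,0}$. Hence $(P(D)+\lambda)^{-1}$ is a pseudo-differential operator of order $-2s$ and maps $W^{k,r}(\R^n)$ boundedly into $W^{k+2s,r}(\R^n)$ for every $k \geq 0$ and every $1 < r < \infty$ (by H\"ormander--Mikhlin applied to $m$, or directly via the standard $S^{-2s}_{1,0}$ calculus).

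First I would upgrade integrability. If $s \geq n/2$ Sobolev gives $Q \in \bigcap_{p<\infty} L^p$ at once (and $Q \in L^\infty$ if $s > n/2$), so the interesting case is $s < n/2$. Start from $Q \in L^{q_0}$ with $q_0 = 2n/(n-2s)$. Since $\sigma \in \N$ is an integer, the nonlinearity is the polynomial $|Q|^{2\sigma} Q = Q^{\sigma+1}\bar{Q}^\sigma$, and H\"older gives $|Q|^{2\sigma}Q \in L^{q_0/(2\sigma+1)}$. Applying the resolvent and then Sobolev embedding yields $Q \in L^{q_1}$ with
$$
\frac{1}{q_1} = \frac{2\sigma+1}{q_0} - \frac{2s}{n},
$$
or $Q \in L^\infty$ if the right-hand side is non-positive. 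An elementary computation shows that the subcriticality hypothesis $\sigma < \sigma_*(n,s)=2s/(n-2s)$ is \emph{precisely} the condition $q_1 > q_0$, so the iteration gains a definite amount of integrability at each step. After finitely many iterations one therefore reaches $Q \in L^p$ for every $p \in [2,\infty)$, and one further application of the resolvent produces $Q \in L^\infty(\R^n)$.

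Second, I would run the regularity bootstrap starting from $Q \in H^s \cap L^\infty$. The classical Moser--Kato--Ponce product estimates, together with the fact that $H^k \cap L^\infty$ is stable under polynomial maps (and $Q^{\sigma+1}\bar Q^\sigma$ is literally a polynomial in $Q$ and $\bar Q$), give $|Q|^{2\sigma}Q \in H^s \cap L^\infty$, hence $Q = (P(D)+\lambda)^{-1}(|Q|^{2\sigma}Q) \in H^{3s}$. Inductively, if $Q \in H^{ks}\cap L^\infty$ for some $k \geq 1$, the same product estimate yields $|Q|^{2\sigma} Q \in H^{ks}$, and one more application of the smoothing resolvent gives $Q \in H^{(k+2)s}$ (with $L^\infty$ preserved by Sobolev embedding once $ks > n/2$, or directly by the already-established $L^\infty$ bound). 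Iterating over $k$ produces $Q \in H^{ks}$ for every $k \in \N$, so $Q \in H^\infty(\R^n)$, as claimed.

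The main technical obstacle I expect is the integrability bootstrap: verifying that the critical-threshold arithmetic $\sigma < \sigma_*(n,s) \Longleftrightarrow q_1>q_0$ genuinely reopens at every iteration (so that in finitely many steps any exponent is exceeded), and justifying the $L^r\!\to\!W^{2s,r}$ boundedness of $(P(D)+\lambda)^{-1}$ on the entire range of $r$ appearing along the iteration. Once $Q \in L^\infty$ is in hand the subsequent smoothness induction is mechanical, so all the real work is concentrated in the first stage.
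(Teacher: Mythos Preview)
Your proposal is correct and follows essentially the same route as the paper: first an $L^p$ bootstrap via the smoothing resolvent to reach $Q \in L^\infty$, then a Sobolev bootstrap using that $H^k \cap L^\infty$ is an algebra. The only cosmetic differences are that the paper shifts by a large constant $\mu$ (writing $(P(D)+\mu)Q = |Q|^{2\sigma}Q + (\mu-\lambda)Q$) rather than using $(P(D)+\lambda)^{-1}$ directly, and it simply cites the algebra property of $H^s\cap L^\infty$ instead of invoking Kato--Ponce by name.
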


\begin{proof}
This follows from Sobolev embeddings and regularity theory for pseudo-differential operators. For the reader's convenience, we give the details. By picking a sufficiently large constant $\mu > 0$, we can assume that $p(\xi) + \mu \gtrsim \langle \xi \rangle^{2s}$ holds. Hence $Q \in H^s(\R^n)$ solves
\be \label{eq:Qregularity}
(P(D) + \mu ) Q = (Q \overline{Q})^{\sigma} Q + (\mu - \lambda) Q .
\ee
Indeed, let us first suppose that $Q \in H^s(\R^n) \cap L^\infty(\R^n)$. Then $(P(D) + \mu)  Q = (Q \overline{Q})^\sigma Q + (\mu-\lambda) Q \in H^{s} \cap L^\infty(\R^n)$ holds, since $\sigma$ is an integer and $H^s(\R^n) \cap L^\infty(\R^n)$ forms an algebra. Now since $p(\xi) + \mu \gtrsim \langle \xi \rangle^{2s}$, we have that $(P(D)+ \mu)^{-1}$ belongs to class $S^{-2s}_{1,0}$. Therefore $(P(D)+\mu)^{-1} : H^{m}(\R^n) \to H^{m+2s}(\R^n)$ for any $m \in \R$ and we deduce that $Q \in H^\infty(\R^n) = \cap_{k \geq 0} H^k(\R^n)$ by iterating the equation \eqref{eq:Qregularity}.

It remains to show that $Q \in L^\infty(\R^n)$ follows from our assumptions. If $s > n/2$, this is clearly true by Sobolev embeddings. For $0 < s \leq n/2$, we need to bootstrap the equation by using the mapping properties of the inverse $(P(D)+\mu)^{-1}$. Indeed, we note that $|Q|^{2 \sigma} Q \in L^{\frac{p_*}{2\sigma+1}}(\R^n)$ with $p_* = 2n/(n-2s)$ by the Sobolev embedding $H^{s}(\R^n) \subset L^{p_*}(\R^n)$. Since $(P(D)+\mu)^{-1} : H^{m, p}(\R^n) \to H^{m+2s, p}(\R^n)$ for any $m \in \R$ and $1 < p < \infty$, we deduce that $Q \in H^{2s, \frac{p_*}{2\sigma+1}}(\R^n)$, which is a gain of regularity for $Q$. We can proceed this argument to obtain after finitely many steps that $Q \in H^{m,p}(\R^n)$ with $m > n/p$, which yields that $Q \in L^\infty(\R^n)$ by Sobolev embeddings.
\end{proof}

\subsection{On the Notion of Ground State Solutions}
As remarked in the introduction, we have the following simple fact, where we assume $n,s,\sigma$, and $\lambda$ satisfy the assumptions of Theorem \ref{thm:main}. Recall the definition of the set $\mathcal{G}$ in \eqref{def:G}.

\begin{lemma} \label{lem:Qequiv}
$Q \in H^s(\R^n)$ is a ground state solution of \eqref{eq:Q} if and only if $Q \in \mathcal{G}$.
\end{lemma}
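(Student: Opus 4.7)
My plan is to reduce both notions --- ``ground state solution'' and ``element of $\mathcal{G}$'' --- to the \emph{same} minimization over the critical set $K = \{u \in H^s(\R^n)\setminus\{0\} : \mathcal{A}'(u)=0\}$. The key ingredient is the Pohozaev-type identity obtained by pairing \eqref{eq:Q} with $Q$: every $Q \in K$ satisfies
$$ \langle Q, (P(D)+\lambda) Q \rangle \;=\; \|Q\|_{L^{2\sigma+2}}^{2\sigma+2}. $$
Substituting into the action yields $\mathcal{A}(Q) = \tfrac{\sigma}{2(\sigma+1)} \|Q\|_{L^{2\sigma+2}}^{2\sigma+2}$ for every $Q \in K$, so on the critical set, minimizing $\mathcal{A}$ is equivalent to minimizing $\|\cdot\|_{L^{2\sigma+2}}$.

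Next I will introduce the scale-invariant Weinstein-type quotient
$$ J(f) \;=\; \frac{\langle f,(P(D)+\lambda)f\rangle}{\|f\|_{L^{2\sigma+2}}^2}, \qquad f \in H^s(\R^n)\setminus\{0\}, $$
whose infimum $J_* := \inf_{f\neq 0} J(f)$ controls the sharp constant $C = 1/J_*$ in \eqref{ineq:sob}. Being a ground state solution then reads: $Q \in K$ and $J(Q)=J_*$. Using the Pohozaev identity once more, on $K$ the quotient reduces to $J(Q) = \|Q\|_{L^{2\sigma+2}}^{2\sigma}$, so on $K$ the three minimizations --- of $\mathcal{A}$, of $\|\cdot\|_{L^{2\sigma+2}}$, and of $J$ --- are all equivalent. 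This equivalence is the conceptual engine of the proof.

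With this in place, the forward direction is immediate: if $Q$ is a ground state solution then $Q \in K$ and $J(Q)=J_*\leq J(R)$ for every $R\in K$, which by the equivalence above gives $\mathcal{A}(Q)\leq \mathcal{A}(R)$ for all $R \in K$, so $Q \in \mathcal{G}$. For the converse, I invoke the existence of an optimizer $\tilde Q \in H^s(\R^n)\setminus\{0\}$ for \eqref{ineq:sob} (the standard variational input already cited right before the definition of ground state solution, coming from the subcritical Sobolev embedding). Rescaling $\tilde Q' := \alpha \tilde Q$ with $\alpha>0$ chosen so that the Euler--Lagrange equation becomes \eqref{eq:Q} puts $\tilde Q' \in K$, and scale invariance of $J$ gives $J(\tilde Q')=J(\tilde Q)=J_*$. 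If $Q \in \mathcal{G}$ then $\mathcal{A}(Q)\leq \mathcal{A}(\tilde Q')$, and the equivalence yields $J(Q)\leq J_*$, hence $J(Q)=J_*$, showing that $Q$ optimizes \eqref{ineq:sob}.

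The one non-trivial step is the existence of an optimizer for $J$, needed in the $(\Leftarrow)$ direction; but this is precisely the standard subcritical variational existence already asserted in the paper, so no additional obstacle is anticipated and the remaining manipulations are purely algebraic.
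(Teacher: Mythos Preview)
Your argument is correct and follows essentially the same route as the paper's proof: both use the identity $\langle Q,(P(D)+\lambda)Q\rangle=\|Q\|_{L^{2\sigma+2}}^{2\sigma+2}$ on $K$ to reduce the comparison of $\mathcal{A}$ to that of $\|\cdot\|_{L^{2\sigma+2}}$, and for the converse both compare against a (rescaled) optimizer of \eqref{ineq:sob}. The only cosmetic difference is that you phrase things through the Weinstein quotient $J$ with $J_*=1/C$, whereas the paper works directly with the optimal constant $C$; the logic is identical.
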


\begin{proof}
Let $Q, R \in H^s(\R^n)$ be two non-trivial solutions of \eqref{eq:Q}. By integrating the equation \eqref{eq:Q} against $\overline{Q}$ and $\overline{R}$, we find 
\be \label{eq:QR}
\langle Q,(P(D) + \lambda) Q \rangle = \| Q \|_{L^{2 \sigma+2}}^{2 \sigma+2}, \quad  \langle R, (P(D)+ \lambda) R \rangle = \| R \|_{L^{2 \sigma+2}}^{2 \sigma+2}.
\ee
As a consequence, we get
$$
\mathcal{A}(Q) = \left ( \frac{1}{2} - \frac{1}{2 \sigma+2} \right ) \| Q \|_{L^{2 \sigma+2}}^{2 \sigma+2}, \quad \mathcal{A}(R) =  \left ( \frac{1}{2} - \frac{1}{2 \sigma+2} \right ) \| R \|_{L^{2 \sigma+2}}^{2 \sigma+2} .
$$
Hence we have the equivalence
$$
\mathcal{A}(Q) \leq \mathcal{A}(R) \quad \Longleftrightarrow \quad \| Q \|_{L^{2 \sigma+2}} \leq \| R \|_{L^{2 \sigma+2}}.
$$
Next, let $C> 0$ denote the optimal constant for \eqref{ineq:sob}. From \eqref{eq:QR} we obtain the bounds
$$
\| Q \|_{L^{2 \sigma+2}}^{2 \sigma} \geq \frac{1}{C}, \quad \| R \|_{L^{2\sigma+2}}^{2 \sigma} \geq \frac{1}{C},
$$
where equality occurs if and only if $Q$ and $R$ are optimizers for \eqref{ineq:sob}, respectively.  

Suppose now that $Q$ is a ground state solution, which means an optimizer for \eqref{ineq:sob} by definition. Then we must have $\| R \|_{L^{2 \sigma+2}} \geq \| Q \|_{L^{2\sigma+2}}$. This show that $Q \in \mathcal{G}$.

On the other hand, let us assume that $Q \in \mathcal{G}$. To show that $Q$ must optimize \eqref{ineq:sob}, we argue by contradiction as follows. Suppose $Q$ is not an optimizer. Then $\| Q \|_{L^{2 \sigma+2}} > C^{-1}$. But by taking $R$ to be an optimizer, we deduce that $C^{-1}=\| R \|_{L^{2 \sigma+2}} < \| Q \|_{L^{2 \sigma+2}}$, which contradicts that we must have $\mathcal{A}(Q) \leq \mathcal{A}(R)$.
\end{proof}

\section{Proof of Theorem \ref{thm:lin}}

Let $\psi \in H^s(\R^n)$ be a ground state for \eqref{eq:lin} with $E=E_0 < \inf_{\xi \in \R^n} p(\xi)$. If we set $\lambda = -E$, we can write \eqref{eq:lin} in Fourier space as
\be \label{eq:psi_F}
\widehat{\psi}(\xi) = \frac{1}{p(\xi)+\lambda} ( \widehat{W} \ast \psi )(\xi), \quad \mbox{with $\widehat{W}=-\widehat{V}$.}
\ee  
Note that $\widehat{W} \in L^2(\R^n)$ by assumption and hence $\widehat{(W\psi)} = \widehat{W} \ast \widehat{\psi}$ and, moreover, this is a continuous function because it is the convolution of two $L^2$-functions. Since $p(\xi)+\lambda > 0$ is also continuous by assumption on $p$, we deduce that the Fourier transform $\widehat{\psi}(\xi)$ is a continuous function from \eqref{eq:psi_F}.

Next, we claim that
\be \label{ineq:psi_pos}
|\widehat{\psi}(\xi)| > 0 \quad \mbox{for all $\xi \in \R^n$}.
\ee
To see this, we first note that 
$$
\psi^\bullet = \mathcal{F}^{-1}(|\widehat{\psi}|)
$$ 
is also a ground state solution for \eqref{eq:lin}. Indeed, in view of $\widehat{V}(\xi) < 0$ almost everywhere, we can argue as in the proof of Lemma \ref{lem:element} to conclude
$$
\langle \psi, V \psi \rangle = (\widehat{V} \ast \Psi_{\widehat{\psi}})(0) \geq (\widehat{V} \ast \Psi_{|\widehat{\psi}|})(0) = \langle \psi^\bullet, V \psi^\bullet \rangle,
$$
where we recall that $\Psi_g(\xi) = \int_{\R^n} g(\xi+\eta) \overline{g(\eta)} \, d \eta$ is the autocorrelation function of $g$. Thus from Lemma \ref{lem:element} (i) we readily find that
\begin{align*}
\langle \psi^\bullet, (P(D)+ V) \psi^\bullet \rangle \leq \langle \psi, (P(D)+V) \psi \rangle,
\end{align*} 
whence $\psi^\bullet$ is also a ground state, since we trivially have $\| \psi^\bullet \|_{L^2} = \| \psi \|_{L^2}$.

Therefore, in order to show \eqref{ineq:psi_pos}, we can assume that $\widehat{\psi}(\xi)=|\widehat{\psi}(\xi)| \geq 0$ is non-negative. But from the assumption that $\widehat{W} =-\widehat{V} > 0$ almost everywhere we deduce that $(\widehat{W} \ast \widehat{\psi})(\xi) > 0$ for all $\xi \in \R^n$. By the positivity $p(\xi)+\lambda > 0$, we immediately deduce that \eqref{ineq:psi_pos} holds from \eqref{eq:psi_F}.

Next, we establish the following result.

\begin{proposition} \label{prop:phase_lin}
There exists a constant $\theta \in \R$ such that 
$$
\widehat{\psi}(\xi) = e^{\ii \theta} |\widehat{\psi}(\xi)| \quad \mbox{for all $\xi \in \R^n$.}
$$
\end{proposition}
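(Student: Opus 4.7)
The plan is to leverage the fact, already established in the discussion preceding the proposition, that $\psi^\bullet = \mathcal{F}^{-1}(|\widehat{\psi}|)$ is itself a ground state and shares both its $L^2$-norm and its kinetic energy $\langle f, P(D) f\rangle$ with $\psi$. Variational equality then forces the potential-energy parts to coincide, $\langle \psi, V\psi \rangle = \langle \psi^\bullet, V \psi^\bullet \rangle$. Writing this out via Parseval and using that $\widehat{V}$ is real and even yields
\begin{equation*}
\int_{\R^n} \widehat{V}(\xi)\, \Psi_{\widehat{\psi}}(\xi) \, d\xi = \int_{\R^n} \widehat{V}(\xi)\, \Psi_{|\widehat{\psi}|}(\xi) \, d\xi,
\end{equation*}
where $\Psi_g(\xi) := \int_{\R^n} g(\xi+\eta) \overline{g(\eta)} \, d\eta$ is the autocorrelation of $g$.

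The pointwise triangle inequality gives $|\Psi_{\widehat{\psi}}(\xi)| \leq \Psi_{|\widehat{\psi}|}(\xi)$, and in particular $\mathrm{Re}\, \Psi_{\widehat{\psi}}(\xi) \leq \Psi_{|\widehat{\psi}|}(\xi)$. Since $\widehat{V}(\xi) < 0$ for a.e.~$\xi$, the integrand $\widehat{V} \cdot \bigl( \Psi_{|\widehat{\psi}|} - \mathrm{Re}\, \Psi_{\widehat{\psi}} \bigr)$ has constant sign, so the identity above forces $\mathrm{Re}\, \Psi_{\widehat{\psi}}(\xi) = \Psi_{|\widehat{\psi}|}(\xi)$ for a.e.~$\xi$. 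Combined with the chain $\mathrm{Re}\, \Psi_{\widehat{\psi}} \leq |\Psi_{\widehat{\psi}}| \leq \Psi_{|\widehat{\psi}|}$, this upgrades to $\Psi_{\widehat{\psi}} = \Psi_{|\widehat{\psi}|}$ a.e., and then everywhere by continuity of both sides.

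Since $|\widehat{\psi}(\xi)| > 0$ for all $\xi \in \R^n$ (as already shown) and $\widehat{\psi}$ is continuous on the simply connected space $\R^n$, we may pick a continuous phase function $\phi : \R^n \to \R$ with $\widehat{\psi}(\xi) = |\widehat{\psi}(\xi)| e^{\ii \phi(\xi)}$. Taking the real part of $\Psi_{\widehat{\psi}}(\xi) = \Psi_{|\widehat{\psi}|}(\xi)$ gives, for every fixed $\xi$,
\begin{equation*}
\int_{\R^n} |\widehat{\psi}(\xi+\eta)|\, |\widehat{\psi}(\eta)|\, \bigl( 1 - \cos(\phi(\xi+\eta) - \phi(\eta)) \bigr) \, d\eta = 0.
\end{equation*}
The integrand is non-negative and $|\widehat{\psi}|$ is strictly positive, so $\phi(\xi+\eta) - \phi(\eta) \in 2\pi \Z$ for a.e.~$\eta$. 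Continuity in $\eta$ together with connectedness of $\R^n$ promotes this to every $\eta$, with the integer being the same for all $\eta$. Setting $\eta = 0$ shows that $\xi \mapsto \phi(\xi) - \phi(0)$ is a continuous $2\pi\Z$-valued function on the connected space $\R^n$; it is therefore constantly $0$, so $\phi \equiv \theta := \phi(0)$, which is the claim.

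The main obstacle is the last step of pinning down the phase: one has to translate an integral identity between the complex autocorrelation $\Psi_{\widehat{\psi}}$ and its non-negative majorant $\Psi_{|\widehat{\psi}|}$ into pointwise alignment of the phases $\phi(\xi+\eta) - \phi(\eta)$. The two ingredients that make this possible are (i) the strict negativity $\widehat{V} < 0$ a.e., which forbids cancellations when converting the integral equality into a pointwise one, and (ii) the strict positivity $|\widehat{\psi}| > 0$ on all of $\R^n$ (already secured above via a Perron--Frobenius-type bootstrap from $\widehat{V} < 0$ and $p + \lambda > 0$), which both makes the phase $\phi$ globally well-defined on $\R^n$ and forces the cosine identity to hold pointwise rather than merely on the support of $|\widehat{\psi}|$.
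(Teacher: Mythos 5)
Your proposal is correct and follows essentially the same route as the paper: both derive the autocorrelation identity $\int \widehat{V}\,\Psi_{\widehat{\psi}} = \int \widehat{V}\,\Psi_{|\widehat{\psi}|}$ from the fact that $\psi^\bullet$ is also a ground state, use the strict negativity $\widehat{V}<0$ a.e.\ together with strict positivity $|\widehat{\psi}|>0$ to pin the phase difference to $2\pi\Z$, and then use continuity and connectedness of $\R^n$ to conclude the phase is constant. Your intermediate step of first upgrading to the pointwise identity $\Psi_{\widehat{\psi}}(\xi)=\Psi_{|\widehat{\psi}|}(\xi)$ for every $\xi$ (rather than working directly with the double integral as the paper does) is a minor reorganization, not a different method.
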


\begin{proof}[Proof of Proposition \ref{prop:phase_lin}]
By the continuity of $\widehat{\psi}$ and the fact that $|\widehat{\psi}(\xi)| > 0$ for all $\xi \in \R^n$, there exists a continuous function $\vartheta : \R^n \to \R$ such that
\be \label{eq:phase_first}
\widehat{\psi}(\xi) = e^{\ii \vartheta(\xi)} |\widehat{\psi}(\xi)| \quad \mbox{for all $\xi \in \R^n$}.
\ee
Since $\psi$ and $\psi^\bullet$ are both ground states for \eqref{eq:lin}, we must have equality
\be
(\widehat{W} \ast \Psi_{\widehat{\psi}})(0) = (\widehat{W} \ast \Psi_{|\widehat{\psi}|})(0),
\ee
with the autocorrelation function $\Psi_{g}(\xi) = \int_{\R^n} g(\xi+\eta) \overline{g(\eta)} \, d\eta$. In view of \eqref{eq:phase_first}, we conclude
$$
\int_{\R^n \times \R^n} \widehat{W}(\xi) e^{\ii \{ \vartheta(-\xi+\eta)-\vartheta(\eta) \}} |\widehat{\psi}(\xi+\eta)||\widehat{\psi}(\eta)| \, d\xi \, d\eta = \int_{\R^n \times \R^n} \widehat{W}(\xi) |\widehat{\psi}(\xi+\eta)||\widehat{\psi}(\eta)| \, d\xi \, d\eta.
$$
Since $W(\xi)  |\widehat{\psi}(\xi+\eta)||\widehat{\psi}(\eta)| > 0$ for all $(\xi,\eta) \in \R^n \times \R^n$, we deduce that
$$
\vartheta(-\xi+\eta) - \vartheta(\eta) \in 2 \pi \mathbb{Z} \quad \mbox{for all $(\xi, \eta) \in \R^n \times \R^n$.}
$$
By the continuity of $\vartheta$, the difference above must be locally constant. Since $\R^n \times \R^n$ is connected, we infer that 
\be \label{eq:theta}
\vartheta(-\xi+ \eta) - \vartheta(\eta) = c  \quad \mbox{for all $(\xi,\eta) \in \R^n \times \R^n$},
\ee
with some constant $c \in 2 \pi \mathbb{Z}$. But by choosing $\xi=0$, we see that $c=0$ is the only possibility. From the functional equation \eqref{eq:theta} with $c=0$ we readily deduce that $\vartheta(-\xi) = \vartheta(0)$ for all $\xi \in \R^n$. Hence $\vartheta$ is a constant function and by taking $\theta = \vartheta(0) \in \R$, we complete the proof of Proposition \ref{prop:phase_lin}.
\end{proof}

By applying Proposition \ref{prop:phase_lin}, we complete the proof of Theorem \ref{thm:lin} part (i).

The symmetry property in part (ii) directly follows from the fact that $e^{\ii \theta} \widehat{\psi}(\xi) > 0$ together with the elementary property $f(-x) = \overline{f(x)}$ holds a.\,e. for $f \in L^2(\R^n)$ whenever $\widehat{f}(\xi)$ is real-valued. Finally, let us suppose that $p(-\xi) = p(\xi)$ is even. Then $H=P(D)+ V$ is real operator, i.\,e., we have $\mathrm{Re} \, (H f) = H \mathrm{Re}\, f$. In particular, we thus choose any eigenfunction of $H$ to be real-valued and, in particular, this applies to the ground state $\psi$.

The proof of Theorem \ref{thm:lin} is now complete. \hfill $\qed$

\section{Proof of Theorem \ref{thm:main}}

Let $Q \in H^s(\R^n)$ be a ground state solution as in Theorem \ref{thm:main}. We define the set
\be
\Omega = \{ \xi \in \R^n : |\widehat{Q}(\xi)| > 0 \}.
\ee
This is an open set in $\R$, since the function $|\widehat{Q}|$ is continuous due to analyticity of $\widehat{Q}$ is analytic by our assumption $e^{a |\cdot|} Q \in L^2(\R^n)$ for some $a> 0$ and using standard Paley--Wiener arguments.

\begin{lemma} \label{lem:omega}
It holds that $\Omega = \R^n$.
\end{lemma}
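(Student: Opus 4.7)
The plan is to combine the equation \eqref{eq:Q} on the Fourier side with the symmetrized function $Q^\bullet = \mathcal{F}^{-1}(|\widehat Q|)$ in order to show the stronger statement $|\widehat Q(\xi)| > 0$ for every $\xi \in \R^n$, which immediately gives $\Omega = \R^n$. My first step would be to use the exponential decay hypothesis $e^{a|\cdot|} Q \in L^2(\R^n)$ together with a standard Paley--Wiener argument to extend $\widehat Q$ to a holomorphic function on the complex tube $\{ z \in \C^n : |\mathrm{Im}\, z| < a/(2\pi) \}$. In particular $\widehat Q$ is real-analytic on $\R^n$, and since $Q$ is nontrivial its zero set in $\R^n$ is a proper real-analytic subset, hence has Lebesgue measure zero; consequently $|\widehat Q(\xi)| > 0$ for a.\,e.\ $\xi \in \R^n$.

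Next I would show, paralleling the proof of Theorem \ref{thm:lin} and combining Lemma \ref{lem:element}(i) with the variational characterization of ground states in Lemma \ref{lem:Qequiv}, that $Q^\bullet$ is itself a ground state solution of \eqref{eq:Q}. The key observation is that Lemma \ref{lem:element}(i) yields $\langle Q^\bullet, (P(D) + \lambda) Q^\bullet \rangle = \langle Q, (P(D) + \lambda) Q \rangle$ together with $\| Q^\bullet \|_{L^{2\sigma+2}} \geq \| Q \|_{L^{2\sigma+2}}$; optimality of $Q$ in the Sobolev-type inequality \eqref{ineq:sob} then forces the equality $\| Q^\bullet \|_{L^{2\sigma+2}} = \| Q \|_{L^{2\sigma+2}}$, so that $Q^\bullet$ itself satisfies \eqref{eq:Q} without any rescaling.

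Fourier-transforming the equation $(P(D) + \lambda) Q^\bullet = |Q^\bullet|^{2\sigma} Q^\bullet$ then yields
$$
(p(\xi) + \lambda) \, |\widehat Q(\xi)| \;=\; \widehat{|Q^\bullet|^{2\sigma} Q^\bullet}(\xi) .
$$
Writing $|Q^\bullet|^{2\sigma} Q^\bullet = (Q^\bullet)^{\sigma+1}(\overline{Q^\bullet})^\sigma$ and using $\widehat{Q^\bullet} = |\widehat Q|$ together with $\widehat{\overline{Q^\bullet}}(\xi) = |\widehat Q(-\xi)|$, the right-hand side becomes a $(2\sigma+1)$-fold convolution of non-negative functions each of which is almost everywhere strictly positive. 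For each fixed $\xi$ the integrand defining each successive convolution is non-negative and, since null sets are preserved under reflection and translation, strictly positive almost everywhere; hence the convolution is strictly positive at every $\xi \in \R^n$. Combined with $p(\xi) + \lambda > 0$ for all $\xi$ (a consequence of \eqref{ineq:lamb} and Assumption \ref{ass:1}(ii)), this forces $|\widehat Q(\xi)| > 0$ for every $\xi \in \R^n$, so that $\Omega = \R^n$. The main obstacle is the middle step: verifying that $Q^\bullet$ inherits the ground state property exactly, so that it solves \eqref{eq:Q} with no Lagrange multiplier; once this matching of $L^{2\sigma+2}$-norms is secured, the pointwise positivity of the iterated convolution is a soft consequence of the analyticity-driven a.\,e.\ positivity of $|\widehat Q|$.
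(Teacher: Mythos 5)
Your proposal is correct and reaches the same conclusion, but by a genuinely different route that is worth noting. The paper reduces to $\widehat{Q}=|\widehat{Q}|\geq 0$ exactly as you do, then takes the Fourier-transformed equation $\widehat{Q}=(p+\lambda)^{-1}(\widehat{Q}\ast\cdots\ast\widehat{Q})$ and works \emph{topologically}: via Lemma \ref{lem:convo} it shows $\Omega$ equals its own $k$-fold Minkowski sum, then proves $0\in\Omega$ by a contradiction argument involving the auxiliary function $F(\xi)=\widehat{Q}((k-1)\xi)\widehat{Q}(-\xi)$ and the observation that a nontrivial real-analytic function cannot vanish on a nonempty open set; finally openness of $\Omega$ plus the Minkowski sum identity give $B_{Nr}(0)\subset\Omega$ for all $N$, hence $\Omega=\R^n$. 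You instead use the stronger (but still standard) analyticity fact that the zero set of a nontrivial real-analytic function has Lebesgue measure zero, so $|\widehat{Q}|>0$ a.e.; then a convolution of finitely many nonnegative, a.e.-positive, integrable functions is \emph{everywhere} strictly positive (for each fixed $\xi$ the integrand is nonnegative and a.e.~positive, and finiteness of the iterated integral is guaranteed because the convolution equals $(p(\xi)+\lambda)|\widehat{Q}(\xi)|<\infty$), which immediately gives $\Omega=\R^n$. Your route is shorter and avoids the Minkowski-sum machinery and the separate step establishing $0\in\Omega$; it instead relies on the measure-zero property of analytic zero sets, which is a slightly heavier input than the ``no open set of zeros'' fact the paper uses. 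The step where you argue $Q^\bullet$ solves \eqref{eq:Q} with Lagrange multiplier exactly $1$ is correct and is stated without proof in the paper as well; it follows from combining $\langle Q^\bullet,(P(D)+\lambda)Q^\bullet\rangle=\langle Q,(P(D)+\lambda)Q\rangle$, the forced equality $\|Q^\bullet\|_{L^{2\sigma+2}}=\|Q\|_{L^{2\sigma+2}}$, and the Pohozaev-type identity $\langle Q,(P(D)+\lambda)Q\rangle=\|Q\|_{L^{2\sigma+2}}^{2\sigma+2}$ from Lemma \ref{lem:Qequiv}. Both proofs are valid; the paper's avoids appealing to the measure-theoretic property of analytic varieties, at the cost of the Minkowski-sum bootstrap.
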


\begin{remark*}{\em 
For non-ground state solutions $Q \in H^{s}(\R^n)$ of \eqref{eq:Q}, we expect that $\widehat{Q}$ vanishes at certain points. In fact, we expect that the set $\{ |\widehat{Q}(\xi)| > 0 \}$ is not connected for non-ground state solutions $Q$. }
\end{remark*}

\begin{proof}
In view of Lemma \ref{lem:element}, we remark that $Q^\bullet \in H^s(\R^n)$ is also a ground state solution for \eqref{eq:Q}. Hence we can assume that $\widehat{Q} = |\widehat{Q}| \geq 0$ is non-negative without loss of generality. Next, by applying the Fourier transform to \eqref{eq:Q} and using that $\sigma \in \N$ is an integer, we get
\be
\widehat{Q}(\xi) = \frac{1}{p(\xi) + \lambda} ( \widehat{Q} \ast \ldots \ast \widehat{Q})(\xi)
\ee
with $k=2 \sigma +1 \in \N$ convolutions appearing on the right-hand side. From this identity and Lemma \ref{lem:convo} and iteration, we deduce that $\Omega \subset \R^n$ must be identical to its \textbf{$k$-fold Minkowski sum}, i.\,e.,
\be \label{eq:mink}
\Omega   =   \bigoplus_{m=1}^k \Omega \equiv \left \{  \xi_1 + \ldots + \xi_k : \mbox{$\xi_m \in \Omega$ for $m=1, \ldots, k$} \right \} .
\ee
For the moment, let us now suppose that
\be \label{eq:0_omega}
0 \in \Omega.
\ee
Since $\Omega$ is open, this implies that $B_r(0) \subset \Omega$ for some $r > 0$. By \eqref{eq:mink}, this implies that
$$
\bigoplus_{m=1}^k B_r(0) \subset \Omega.
$$
On the other hand, we readily see that $B_{2r}(0) \subset B_r(0) \oplus B_r(0) \subset \oplus_{m=1}^k B_r(0)$. Iterating this argument, we conclude that
$$
B_{Nr}(0) \subset \Omega \quad \mbox{for all} \quad  N \in \N,
$$
whence it follows that $\Omega=\R^n$ must hold.

Thus it remains to show that \eqref{eq:0_omega} is true. We argue by contradiction as follows. Suppose that $0 \not \in \Omega$ and define the function $F : \R^n \to \R$ by setting
$$
F(\xi) = \widehat{Q}((k-1) \xi) \widehat{Q}(-\xi)
$$
However, we must have
$$
F(\xi) \equiv 0 .
$$
Indeed, if $F(\xi_*) \neq 0$ for some $\xi_* \in \R^n$ then $(k-1) \xi_* \in \Omega$ and $-\xi_* \in \Omega$. This implies that $0 = (k-1) \xi_* - \sum_{m}^{k-1} \xi_* \in \oplus_{m=1}^k \Omega$ so that $0 \in \Omega$ by \eqref{eq:mink}. Thus $0 \not \in \Omega$ implies that $F(\xi) \equiv 0$ vanishes identically. Since $\widehat{Q}((k-1) \xi) \not \equiv 0$, this yields that the function $\widehat{Q}(-\xi)$ must vanish on some non-empty open set in $\R^n$. By the (real) analyticity of $\widehat{Q}: \R^n \to \R$ this implies $\widehat{Q} \equiv 0$ on $\R^n$. But this is a contradiction. 

Thus we have shown that \eqref{eq:0_omega} holds, which completes the proof.
\end{proof}

With the result of Lemma \ref{lem:omega} at hand, we are ready to finish the proof of Theorem \ref{thm:main}. Indeed, if $Q \in H^s(\R^n)$ is a ground state solution, we must necessarily have the equality
$$
\| Q \|_{L^{2 \sigma+2}} = \| Q^\bullet \|_{L^{2 \sigma+2}}.
$$
But we can apply Lemma \ref{lem:HLM} with $f=Q$ and $g = Q^\bullet$ to conclude that $\widehat{Q} = e^{\ii (\alpha + \beta \cdot \xi)}|\widehat{Q}(\xi)|$ for all $\xi$ with some constants $\alpha \in \R$ and $\beta \in \R^n$. Hence we find
$$
Q(x) = e^{\ii \alpha} Q^\bullet(x+x_0)
$$
with the constant $x_0 = -\frac{1}{2\pi} \beta \in \R^n$. The asserted properties of $Q^\bullet$ now follow from Lemma \ref{lem:element} together with the fact that $\widehat{Q^\bullet} \in L^1(\R^n)$,  since we have $(1+|\xi|)^m \widehat{Q} \in L^2(\R^n)$ for $m > n/2$ by Proposition \ref{prop:Qsmooth}. 

Finally, let us additionally assume that the symbol 
$$
p(-\xi) = p(\xi)
$$ 
is even. In this case, we can adapt a trick from \cite{FrLiSa-16} (see also Lemma \ref{lem:trick}) to show that any ground state $Q \in H^s(\R^n)$ must be real-valued up to a trivial constant complex phase, i.\,e., we claim that
\be \label{eq:phase}
e^{\ii \theta} Q(x) \in \R \quad \mbox{for all} \quad \xi \in \R^n  
\ee 
with some constant $\theta \in \R$. To prove this, we decompose
$$
Q = Q_R + \ii Q_I
$$
into real and imaginary part. If either $Q_R \equiv 0$ or $Q_I \equiv 0$, then there is nothing is left to prove. Hence we assume that both parts are non-trivial. From Lemma \ref{lem:trick} we obtain
\be
\langle Q, (P(D) + \lambda) Q \rangle =  \langle Q_R, (P(D)+ \lambda) Q_R \rangle + \langle Q_I, (P(D) + \lambda) Q_I \rangle =: D_R + D_I,
\ee
\be \label{ineq:Lsig}
\| Q \|_{L^{2\sigma+2}}^2 \leq \|Q_R\|_{L^{2 \sigma+2}}^2 + \| Q_I \|_{L^{2 \sigma+2}}^2 =: N_R + N_I. 
\ee
Now let $C >0$ denote the optimal constant for \eqref{ineq:sob}. Since $Q$ is an optimizer, we deduce
$$
C = \frac{\| Q \|_{L^{2 \sigma+2}}^2}{\langle f, (P(D) + \lambda) f \rangle} \leq \frac{N_R+N_I}{D_R+D_I} \leq \max \left ( \frac{N_R}{D_R}, \frac{N_I}{D_I} \right ) \leq C.
$$
This shows that we must have equality in \eqref{ineq:Lsig}, which by Lemma \ref{lem:trick} and $Q_R \not \equiv 0 \not \equiv Q_I$ implies that there is some constant $\alpha >0$ such that $Q_I^2 = \alpha^2 Q_R^2$. We want to establish $Q_I = \pm  \alpha Q_R$. To do so, we apply Lemma \ref{lem:trick} now to the decomposition
$$
Q= e^{\ii \pi/4} Q_a + \ii e^{\ii \pi/4} Q_b
$$
with real-valued functions $Q_a$ and $Q_b$. In fact, an elementary computation shows that $Q_a = \frac{\ii}{\sqrt{2}}(Q_R + Q_I)$ and $Q_b = \frac{1}{\sqrt{2}}(-Q_R+Q_I)$. We still have $|Q(x)|^2 = Q_a(x)^2 + Q_b(x)^2$ and also $\langle Q, (P(D)+\lambda) Q\rangle = \langle Q_a,(P(D)+\lambda Q_a \rangle+ \langle Q_b, (P(D)+\lambda) Q_b \rangle$ by using that $p(-\xi)=p(\xi)$ is even. Now if $Q_a \equiv 0$, then we are done since $Q_I=-Q_R$ in this case. If $Q_a \not \equiv 0$, we obtain $Q_b^2 = \beta^2 Q_a^2$ with some constant $\beta > 0$. Note that $\beta^2 \neq 1$ because otherwise this would imply $Q_R Q_I \equiv 0$ (which would yield $Q \equiv 0$ from using $Q_I^2 = \alpha^2 Q_R^2$). In summary, we conclude 
$$
Q_I^2 = \alpha^2 Q_R^2 \quad \mbox{and} \quad \frac{1}{2} (1 + \alpha^2) (1-\beta^2) Q_R^2 = (1+\beta^2) Q_R Q_I.
$$
But this implies that $Q_I = \pm \alpha Q_R$, which  proves that \eqref{eq:phase} is true.

The proof of Theorem \ref{thm:main} is now complete. \hfill $\Box$

\section{Proof of Theorem \ref{thm:exp}}

We will adapt an elegant idea due Combes and Thomas \cite{CoTh-73} who proved exponential decay of eigenfunctions for $N$-body Schr\"odinger operators by an analytic continuation argument, which is based on O'Connor's Lemma (see Lemma \ref{lem:oconnor} below) together with standard analytic perturbation theory; see \cite{ReSi-78, Ka-95}.

We define the operator  $H=P(D)+V$ with $V = -|Q|^{2 \sigma}$ acting on $L^2(\R^n)$. Note that $V \in L^\infty(\R^n)$ is bounded by Proposition \ref{prop:Qsmooth}. Hence, by standard theory, the operator $H$ is self-adjoint with operator domain $H^{2s}(\R^n)$. In particular, we see that $Q$ is an $L^2$-eigenfunction of $H$ satisfying
$$
H Q = -\lambda Q.
$$
Since $V(x) \to 0$ as $|x| \to \infty$, we have $\sigma_{\mathrm{ess}}(H) = \sigma_{\mathrm{ess}}(P(D)) = \inf_{\xi \in \R^n} p(\xi)$. By our assumption \eqref{ineq:lamb}, we see that the eigenvalue $-\lambda$ lies strictly below the essential spectrum of $H$.

We shall now implement an analytic continuation argument to show that $e^{a |\cdot|} Q \in L^2(\R^n)$ must hold for some sufficiently small $a > 0$. To do so, we adapt an argument due to Combes and Thomas as follows. For real $\kappa \in \R^n$, we can define the unitary operators 
$$
(U(\kappa) f)(x)=e^{2 \pi \ii \kappa \cdot x} f(x)
$$ 
acting on $L^2(\R^n)$. Likewise, we consider the family of unitarily equivalent operators
$$
H(\kappa) = U(\kappa)H U(\kappa)^{-1}.
$$
We  readily find that
$$
U(\kappa) P(D) U(\kappa)^{-1} = P_\kappa(D), \quad U(\kappa) V U(\kappa)^{-1} = V,
$$
where $P_\kappa(D)$ has the shifted symbol $p(\xi+\kappa)$.

Now, by standard Paley-Wiener theory, we note that if $U(\kappa) Q$ has an analytic continuation for $|\mathrm{Im} \, \kappa| < \delta$ then $e^{a |\cdot|} Q \in L^2(\R^n)$ for all $0 < a < \delta$, which would finish the proof. To see that $U(\kappa) Q$ can be analytically continued if $|\mathrm{Im} \, \kappa| < \delta$ for some $\delta > 0$, we prove that $H(\kappa)$ is an {\em analytic family of type (B)} on the complex strip $T_\delta$.  We use a form argument as follows. For any $\kappa \in T_\delta$, we can define the quadratic form
\be
{\tt q}(\kappa)[f,f] = \int_{\R^n} p(\xi+\kappa) |\widehat{f}(\xi)|^2 \, d \xi + \int_{\R^n} V |f|^2 \, dx \quad \mbox{for} \quad f \in H^s(\R^n).
\ee
We claim that $\{ {\tt q}(\kappa)\}_{\kappa \in T_\delta}$ is an analytic family of quadratic forms of type (b) with form domain $H^s(\R^n)$ (in the nomenclature of \cite{ReSi-78}). That is, we have the following properties.
\begin{enumerate}
\item[(1)] For each $\kappa \in T_\delta$, the form ${\tt q}(\kappa)$ is closed and strictly $m$-sectorial with domain $H^s(\R^n)$.
\item[(2)] For each $f \in H^s(\R^n)$, the function $\kappa \mapsto {\tt q}(\kappa)[f,f]$ is analytic in $\kappa \in T_\delta$.
\end{enumerate}
Indeed, by Assumption \ref{ass:2} item (i), we see that ${\tt q}$ is strictly $m$-sectorial (see \cite{ReSi-78} for the relevant definition). To show that ${\tt q}(\kappa)$ is closed on the domain $H^s(\R^n)$, it suffices to show that its real part $\mathrm{Re} ({\tt q})(\kappa)$ is closed, i.\,e., if $f_n \in H^s(\R^n)$  with $f_n \to f$ in $L^2(\R^n)$ and $\mathrm{Re}({\tt q})(\kappa)[f_n-f_m, f_n -f_m] \to 0$ as $m,n \to \infty$ then $f \in H^s(\R^n)$. But this later claim easily from property (ii) in Assumption \ref{ass:2}. This shows (1) above. Finally, we note that (2) obviously holds by our analyticity assumption on the symbol $p$. From the fact that ${\tt q}(\kappa)$ is an analytic family of form of type (b) it follows that the set of associated operators $\{ H(\kappa) \}_{\kappa \in T_\delta}$ defines an analytic family of operators of type (B).

Now, by standard perturbation theory, any discrete eigenvalue $E(\kappa_0)$ of  $H(\kappa_0)$ moves analytically for $\kappa$ close to $\kappa_0$. But if $\mathrm{Im} (\kappa-\kappa_0)=0$, we have that $E(\kappa) = E(\kappa_0)$ since the operators $H(\kappa)$ and $H(\kappa_0)$ are unitarily equivalent in this case. Hence $E(\kappa)$ is constant and remains an eigenvalue as long as it stays away from $\sigma_{\mathrm{ess}}(H(\kappa))$.
 
Now we recall that $Q$ is an eigenfunction of $H=H(0)$ with the discrete eigenvalue $E=-\lambda \in \sigma_{\mathrm{disc}}(H)$. By standard perturbation theory \cite{ReSi-78, Ka-95}, we find that $E(\kappa) \in \sigma_{\mathrm{disc}}(H(\kappa))$  provided that $|\kappa| \leq b$ with some sufficiently small number $b>0$.  Since the operators $H(\kappa) = H(\ii \, \mathrm{Im} \, \kappa)$ are unitarily equivalent, we see
$$
\sigma_{\mathrm{disc}}(H(\kappa)) = \sigma_{\mathrm{disc}}(H(\ii \, \mathrm{Im} \kappa)).
$$
Thus we deduce that $E \in \sigma_{\mathrm{disc}}(H(\kappa))$ for all $\kappa$ with $|\mathrm{Im} \, \kappa| < b$. Hence it follows from standard perturbation theory that the finite rank projections
$$
P(\kappa) = \frac{1}{2 \pi \ii} \oint_{|E-z|=r} (z-H(\kappa))^{-1} \, dz 
$$
with some small constant $r > 0$ are analytic in the strip $T_b = \{ \kappa \in \C^n : |\mathrm{Im} \, \kappa| < b \}$. We now apply Lemma \ref{lem:oconnor} to conclude that $U(\kappa) Q$ has an analytic continuation to the strip $T_b$, which shows that $e^{a |\cdot|} Q \in L^2(\R^n)$ for all $0 < a < b$.

The proof of Theorem \ref{thm:exp} is now complete. \hfill $\qed$

\begin{appendix}

\section{Auxiliary Results}

\begin{lemma} \label{lem:trick}
Suppose $P(D)$ satisfies Assumption \ref{ass:lin} with some $s>0$ and its multiplier $p(-\xi)=p(\xi)$ is an even function and let $\lambda \in \R$. Let $f \in H^s(\R^n)$ with $f:\R^n \to \C$ be of the form 
$$
f(x) = e^{\ii \vartheta} f_R(x)+ \ii e^{\ii \vartheta} f_I(x)
$$
with some constant $\vartheta \in \R$ and real-valued functions $f_R, f_I : \R^n \to \R$. Then we have
$$
\langle f, (P(D) +\lambda) f \rangle = \langle f_R, (P(D)+\lambda) f_R \rangle + \langle f_I, (P(D)+\lambda) f_I \rangle.
$$ 
Moreover, if $f \in L^q(\R^n)$ for some $2 < q < \infty$ then
$$
\| f \|_{L^q}^2 \leq \| f_R \|_{L^q}^2 + \| f_I \|_{L^q}^2,
$$
where equality holds if and only if $f_I = 0$ or $f_R^2 = \mu^2 f_I^2$ with some constant $\mu \geq 0$.
\end{lemma}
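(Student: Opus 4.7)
The plan is to reduce both claims to elementary facts once the global phase is peeled off. Multiplication by $e^{\ii \vartheta}$ is an isometry on both $L^2(\R^n)$ and $L^q(\R^n)$ and commutes with $P(D)+\lambda$, so I may assume $\vartheta = 0$, and write $f = f_R + \ii f_I$ throughout.

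For the quadratic-form identity, I would expand
\begin{align*}
\langle f, (P(D)+\lambda) f \rangle = {} & \langle f_R, (P(D)+\lambda) f_R \rangle + \langle f_I, (P(D)+\lambda) f_I \rangle \\
& {} + \ii \langle f_R, (P(D)+\lambda) f_I \rangle - \ii \langle f_I, (P(D)+\lambda) f_R \rangle,
\end{align*}
and show that the two cross terms cancel. The key input is that when $p$ is real \emph{and} even, $P(D)$ sends real-valued functions to real-valued functions: for real $g$, the identity $\widehat{g}(-\xi) = \overline{\widehat{g}(\xi)}$ together with $p(-\xi) = p(\xi) \in \R$ gives that $p(\xi)\widehat{g}(\xi)$ enjoys the same reflection-conjugation symmetry, so $P(D)g$ is real. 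Applied here, $(P(D)+\lambda)f_R$ and $(P(D)+\lambda)f_I$ are real, so both cross inner products are real numbers; by self-adjointness of $P(D)+\lambda$ they coincide, and the $\pm \ii$ prefactors make them cancel. This step is precisely where the evenness hypothesis enters essentially (real-valuedness alone is not enough, as the odd symbol $p(\xi) = v \cdot \xi$ shows).

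For the $L^q$-inequality, the pointwise identity $|f(x)|^2 = f_R(x)^2 + f_I(x)^2$ yields
\[
\| f \|_{L^q}^2 = \left\| f_R^2 + f_I^2 \right\|_{L^{q/2}}.
\]
Since $q > 2$, we have $q/2 > 1$, so Minkowski's inequality applies to the non-negative functions $f_R^2$ and $f_I^2$ in $L^{q/2}(\R^n)$ and directly gives
\[
\| f \|_{L^q}^2 \leq \| f_R^2 \|_{L^{q/2}} + \| f_I^2 \|_{L^{q/2}} = \| f_R \|_{L^q}^2 + \| f_I \|_{L^q}^2.
\]
For the equality case I would invoke the standard rigidity of Minkowski's inequality in $L^p$ with $p > 1$: equality for non-negative summands forces the two to be non-negatively proportional. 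Applied to $g_1 = f_R^2$ and $g_2 = f_I^2$, this yields either $f_I \equiv 0$ or $f_R^2 = \mu^2 f_I^2$ for some constant $\mu \geq 0$, as claimed.

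There is no serious obstacle here; the argument is structural algebra plus a citation of Minkowski. The only point that deserves care is noting that \emph{both} the real-valuedness and the evenness of $p$ are needed to guarantee that $P(D)$ is a real operator, which is exactly what kills the cross terms in part one.
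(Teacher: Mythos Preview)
Your proof is correct and follows essentially the same approach as the paper: the paper expands $\langle f,(P(D)+\lambda)f\rangle$ on the Fourier side and kills the cross term via $\widehat{f_R}(-\xi)=\overline{\widehat{f_R}(\xi)}$ and $p(-\xi)=p(\xi)$, which is precisely your ``$P(D)$ is a real operator'' observation phrased in Fourier variables; for the $L^q$-part both arguments are Minkowski in $L^{q/2}$ with its equality case.
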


\begin{proof}
By subtracting the constant $\lambda$ from $p(\xi)$, we can assume without loss of generality that $\lambda = 0$ holds. Since $f_R, f_I : \R^n \to \R$ are real-valued, their Fourier transforms satisfy $\widehat{f}_R(-\xi) = \overline{\widehat{f}_R(\xi)}$ and $\widehat{f}_I(-\xi) = \overline{\widehat{f}_I(\xi)}$. Using that $p(-\xi)=p(\xi)$ is even and $|e^{\ii \vartheta} z|=|z|$ for all $z \in \C$, we calculate
\begin{align*}
\langle f,  P(D) f \rangle & = \int_{\R^n} p(\xi) |\widehat{f}_R(\xi) + \ii \widehat{f}_I(\xi)|^2 \,  d \xi = \int_{\R^n}  |\widehat{f}_{R}(\xi)|^2 \, d \xi + \int_{\R^n} p(\xi) |\widehat{f}_{I}(\xi)|^2 \, d\xi  \\
& \quad + \ii \int_{\R^n} p(\xi) \left [ \overline{\widehat{f}_R}(\xi) \widehat{f}_I(\xi)  - \widehat{f}_R(\xi) \overline{\widehat{f}_I}(\xi) \right ] \, d \xi  =  \langle f_R, P(D) f_R\rangle  + \langle f_I, P(D) f_I \rangle, 
\end{align*}
as claimed.

Assume now that $f \in L^q(\R^n)$ for some $2< q < \infty$. From the triangle inequality for the $L^{q/2}$-norm we find
$$
\| f \|_{L^q}^2 = \| |f_R|^2 + |f_I|^2 \|_{L^{q/2}} \leq \| |f_R|^2 \|_{L^{q/2}} + \| |f_I|^2 \|_{L^{q/2}} = \| f_R \|_{L^q}^2 + \|f_I \|_{L^q}^2.
$$
By the strict convexity of the $L^{q/2}$-norm for $2 < q <\infty$, we have equality if and only if $f_I=0$ or $f_R^2 = \mu^2 f_I^2$ for some constant $\mu \geq 0$.
\end{proof}

\begin{lemma}\label{lem:convo}
Let $f,g : \R^n \to [0, \infty)$ be two non-negative and continuous functions. Assume that their convolution
$$
(f \ast g)(x) = \int_{\R^n} f(x-y) g(y) \, d y 
$$
has finite values for all $x \in \R^n$. Then it holds that 
$$
\{ x \in \R^n : f \ast g > 0 \} = \{ x\in \R^n : f > 0 \} \oplus \{ x \in \R^n : g > 0 \} .
$$
where $A \oplus B =  \{ a + b : a \in A, b \in B \}$ denotes the Minkowski sum of two sets $A, B \subset \R^n$.
\end{lemma}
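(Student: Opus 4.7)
Write $A := \{f > 0\}$ and $B := \{g > 0\}$; both sets are open since $f$ and $g$ are continuous. My strategy is to establish the stated equality by verifying the two inclusions $A \oplus B \subset \{f \ast g > 0\}$ and $\{f \ast g > 0\} \subset A \oplus B$ separately, each by a short direct argument.

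For the forward inclusion $A \oplus B \subset \{f \ast g > 0\}$, I would fix a point $x = a + b$ with $a \in A$ and $b \in B$. Openness of $A$ and $B$ yields some $r > 0$ such that $B_r(a) \subset A$ and $B_r(b) \subset B$. Restricting the convolution integral to $B_r(b)$ gives
\[
(f \ast g)(x) \;\geq\; \int_{B_r(b)} f(x-y)\, g(y) \, dy,
\]
and for $y \in B_r(b)$ the point $x - y$ lies in $B_r(a)$, so the integrand is a non-negative continuous function which is strictly positive on the open ball $B_r(b)$; hence the integral is strictly positive, i.e.~$(f \ast g)(x) > 0$.

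For the reverse inclusion $\{f \ast g > 0\} \subset A \oplus B$, I would take $x$ with $(f \ast g)(x) > 0$ and argue by direct inspection: the integrand $y \mapsto f(x-y)\, g(y)$ is non-negative with strictly positive integral, so it must be strictly positive on a set of positive Lebesgue measure. In particular there exists at least one $y_0 \in \R^n$ with $f(x - y_0) > 0$ and $g(y_0) > 0$. Then $y_0 \in B$, $x - y_0 \in A$, and $x = (x - y_0) + y_0 \in A \oplus B$.

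The argument is entirely elementary and I do not anticipate any real obstacle. The only point that requires a moment of care is the forward inclusion, where continuity of both $f$ and $g$ is used in an essential way to upgrade pointwise positivity at the individual points $a$ and $b$ to positivity on an open neighborhood; without this, one would only have $f(x-y)g(y) > 0$ at a single $y$, which is insufficient to force the integral to be strictly positive. The reverse inclusion, in contrast, uses no continuity at all and only the non-negativity of $f$ and $g$.
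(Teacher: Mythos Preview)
Your proof is correct and essentially identical to the paper's: the forward inclusion is argued in exactly the same way (using continuity to get open balls on which $f$ and $g$ are strictly positive), and for the reverse inclusion you give the direct form of the argument while the paper phrases the same observation contrapositively (if $x \notin A \oplus B$ then the integrand vanishes identically).
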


\begin{remark*}
{\em We could also allow that $(f \ast g)(x) = +\infty$ for some $x \in \R^n$ and the result remains valid. But since we apply this lemma iteratively in the proof of Theorem \ref{thm:main}, we assume that $(f \ast g)(x) < +\infty$ for all $x \in \R^n$.}
\end{remark*}

\begin{proof}
The proof is elementary. For the reader's convenience, we give the details.

Let us write $\Omega_f = \{ f > 0 \}$, $\Omega_g = \{ g > 0 \}$ and $\Omega_{f \ast g} = \{ f \ast g > 0 \}$. We suppose that both $f \not \equiv 0$ and $g \not \equiv 0$, since otherwise the claimed result trivially follows.

First, we show that $\Omega_f \oplus \Omega_g \subset \Omega_{f \ast g}$. Let $x = x_1 + x_2$ with $x_1 \in \Omega_f$ and $x_2 \in \Omega_g$. By continuity of $f$ and $g$, there exists some $\eps > 0$ such that $f > 0$ on $B_\eps(x_1)$ and $g > 0$ on $B_\eps(x_2)$. Thus, by using that $f \geq 0$ and $g \geq 0$ on all of $\R^n$, we get
$$
(f \ast g)(x)  = \int_{\R^n} f(x-y) g(y) \, d y \geq \int_{B_\eps(x_2)} f(x_1 + x_2 - y) g(y) \, d y  > 0,
$$
since $x_1 + x_2 - y \in B_\eps(x_1)$ when $y \in B_\eps(x_2)$. This shows that $\Omega_f \oplus \Omega_g \subset \Omega_{f \ast g}$.

Next, we prove that $\Omega_{f \ast g} \subset \Omega_f \oplus \Omega_g$ holds. Indeed, for every $x \in \R^n$, we can write
$$
(f \ast g)(x) = \int_{\R^n} f(x-y) g(y) \, d y = \int_{(\{x\} \ominus \Omega_f) \cap \Omega_g} f(x-y) g(y) \, d y,
$$ 
since $f(x-\cdot) \equiv 0$ on $\R^n \setminus (\{x \} \ominus \Omega_f)$ and $g \equiv 0$ on $\R^n \setminus \Omega_g$, where we denote $A \ominus B = \{ a-b : a \in A, \, b \in B\}$ for subsets $A$ and $B$ in $\R^n$. However, if $x \not \in \Omega_f \oplus \Omega_g$ then $(\{x \} \ominus \Omega_f) \cap \Omega_g = \emptyset$. Thus $(f \ast g)(x) = 0$ for any $x \not \in \Omega_f \oplus \Omega_g$, whence it follows that the inclusion $\Omega_{f \ast g} \subset \Omega_f \oplus \Omega_g$ is valid.
\end{proof}

\begin{lemma}[O'Connor's Lemma \cite{OCon-73}] \label{lem:oconnor}
Let $H$ be a Hilbert space and suppose $U(\kappa)$  are unitary operators on $H$ parametrized by $\kappa \in \R^n$. Let $P$ be a finite-rank projection on $H$ such that that $P(\kappa) = U(\kappa) P U(\kappa)^{-1}$ has an analytic continuation to $D=\{ z \in \C^n : |\mathrm{Im} \,z | < a \}$ for some $a > 0$. Then any $f \in \mathrm{ran} \, P$ has an analytic continuation from $D \cap \R$ to $D$ given by $f(\kappa)= U(\kappa) f$.
\end{lemma}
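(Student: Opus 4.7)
The goal is to construct a holomorphic extension to $D$ of the Hilbert-space-valued function $\kappa \mapsto U(\kappa) f$, which is a priori defined only on $\R^n$. The plan follows the strategy of O'Connor \cite{OCon-73}, leveraging Kato's framework of analytic families of projections to reduce the problem to a finite-dimensional one.

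The first step is to build Kato's two-projections transformation function. Define
\[
W(\kappa) := P(\kappa) P + (I - P(\kappa))(I - P), \qquad \kappa \in D,
\]
which is operator-valued analytic in $\kappa$, satisfies $W(0) = I$, and obeys the intertwining identity $W(\kappa) P = P(\kappa) W(\kappa) = P(\kappa) P$. A direct computation yields $W(\kappa) - I = (P(\kappa) - P)(2P - I)$, so that $\| W(\kappa) - I \| \leq \| P(\kappa) - P \|$. Hence on the open set $\Omega := \{\kappa \in D : \|P(\kappa) - P\| < 1\}$—which contains a tubular complex neighborhood of $D \cap \R^n$ by continuity—$W(\kappa)$ is boundedly invertible with analytic inverse via a Neumann series, and the restriction $W(\kappa) : \mathrm{ran}\, P \to \mathrm{ran}\, P(\kappa)$ is a holomorphically varying linear isomorphism between finite-dimensional spaces of equal dimension $N$.

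Next, I would observe that for real $\kappa \in \Omega$ the map $U(\kappa)|_{\mathrm{ran}\, P}$ is also a linear isomorphism onto $\mathrm{ran}\, P(\kappa)$, so that one may factor $U(\kappa) f = W(\kappa) g(\kappa)$ with $g(\kappa) := W(\kappa)^{-1} U(\kappa) f \in \mathrm{ran}\, P$. The problem therefore reduces to analytically extending the function $g$, which takes values in the fixed finite-dimensional space $\mathrm{ran}\, P$; once this is done, the formula $\widetilde f(\kappa) := W(\kappa) g(\kappa)$ gives the desired holomorphic extension on $\Omega$, which is then propagated to all of $D$ by uniqueness of analytic continuation along paths in the connected set $D$, using the local invertibility of $W$ near any base point.

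The main obstacle lies in this last step, namely in extending $g(\kappa)$ holomorphically: since $U(\kappa)$ itself does not extend analytically, the analyticity of $g(\kappa) = W(\kappa)^{-1} U(\kappa) f$ must be extracted indirectly from the analytic structure of $P(\kappa)$ alone. Following O'Connor \cite{OCon-73}, one uses the intertwining $U(\kappa) P = P(\kappa) U(\kappa)$ together with the self-adjointness $P(\kappa)^* = P(\kappa)$ on the real axis to express the coordinates of $g(\kappa)$ in a fixed basis $\{e_j\}$ of $\mathrm{ran}\, P$ as matrix elements of an operator built analytically out of $P$, $P(\kappa)$, and $W(\kappa)^{-1}$—so that the explicit $U(\kappa)$-dependence drops out and holomorphy of $g$ follows from that of $P(\kappa)$. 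In the concrete setting of the theorem, where $P(\kappa)$ arises as a Riesz spectral projection $P(\kappa) = (2\pi\ii)^{-1} \oint (z - H(\kappa))^{-1} \, dz$ of the analytic family $\{H(\kappa)\}_{\kappa \in T_\delta}$, this identification is clean, and the analytic continuation of $U(\kappa) f$ then yields $e^{a|\cdot|} Q \in L^2(\R^n)$ via the standard Paley--Wiener characterization.
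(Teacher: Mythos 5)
The paper does not actually prove this lemma: it is stated in the appendix purely as a citation to O'Connor \cite{OCon-73} and used as a black box, so there is no in-paper argument to compare against. Judged on its own, your proposal sets up the standard reduction correctly (Kato's transformation function $W(\kappa)=P(\kappa)P+(I-P(\kappa))(I-P)$, the identity $W(\kappa)-I=(P(\kappa)-P)(2P-I)$, invertibility where $\|P(\kappa)-P\|<1$, and the resulting analytic isomorphism $\mathrm{ran}\,P\to\mathrm{ran}\,P(\kappa)$), but the entire content of the lemma is concentrated in your last step, which you only describe and do not execute — and the mechanism you describe cannot work. You claim the coordinates of $g(\kappa)=W(\kappa)^{-1}U(\kappa)f$ can be written using only $P$, $P(\kappa)$, $W(\kappa)^{-1}$, the intertwining relation and self-adjointness, ``so that the explicit $U(\kappa)$-dependence drops out.'' This is refuted by a phase counterexample: replace $U(\kappa)$ by $e^{\ii\theta(\kappa)}U(\kappa)$ with $\theta:\R^n\to\R$ continuous but nowhere analytic. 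Then $P(\kappa)=U(\kappa)PU(\kappa)^{-1}$ is unchanged, the intertwining and self-adjointness still hold, yet $U(\kappa)f$ no longer has an analytic continuation. Hence no argument built solely from $P(\kappa)$ can recover the analyticity of $U(\kappa)f$. Concretely, what $P(\kappa)$ gives you analytically is only a ``modulus squared'': if $T(\kappa)_{ij}=\langle e_i,U(\kappa)e_j\rangle$, then for real $\kappa$ one computes $\bigl[\langle e_i,P(\kappa)e_j\rangle\bigr]_{ij}=T(-\kappa)^*T(-\kappa)$, and passing from $T^*T$ to $T$ is a phase-retrieval problem.

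The missing ingredient is the group structure $U(\kappa+\kappa')=U(\kappa)U(\kappa')$ (in the application, $U(\kappa)=e^{2\pi\ii\kappa\cdot x}$), which your proposal never invokes; it is what rules out the phase ambiguity above and is used essentially in O'Connor's and Reed--Simon's proofs (e.g.\ via the relation $P(\kappa+\kappa')=U(\kappa)P(\kappa')U(\kappa)^{-1}$ and an analytic-vector argument for the generator). Note that the paper's own statement of the lemma also omits this hypothesis, so you inherited an imprecision, but a proof must identify and use it. A secondary, repairable issue: the set $\{\kappa\in D:\|P(\kappa)-P\|<1\}$ contains a complex neighborhood of $\kappa=0$ but in general not of all of $D\cap\R^n$ (for distant real $\kappa$ one typically has $\|P(\kappa)-P\|=1$), so the globalization to the full strip again requires translating the construction by real $\kappa_0$ via the group law rather than a single Neumann series.
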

\end{appendix}


\begin{thebibliography}{10}

  
 \bibitem{Bo-62}
R.~P. Boas, Jr., \emph{Majorant problems for trigonometric series}. J. Analyse Math. \textbf{10} (1962/63), 253--271.

\bibitem{BuLeScSo-20}
L. Bugiera, E. Lenzmann, A. Schikorra, and J. Sok, \emph{On symmetry of traveling solitary waves for dispersion generalized {NLS}}. Nonlinearity \textbf{33} (2020), no. 6, 2797--2819.

		

  \bibitem{CoTh-73}    
  J. M. Combes and  L. Thomas, \emph{Asymptotic behaviour of eigenfunctions for multiparticle
              {S}chr\"{o}dinger operators}. Comm. Math. Phys. \textbf{34} (1973), 251--270.
              
  \bibitem{FrLiSa-16}
    R. L. Frank, E. H. Lieb, and J. Sabin, \emph{Maximizers for the {S}tein-{T}omas inequality}. Geom. Funct. Anal. \textbf{26} (2016), no. 4, 1095--1134.
 
 
\bibitem{HeSk-15}
    I. Herbst and E. Skibsted, \emph{Decay of eigenfunctions of elliptic {PDE}'s, {I}}. Adv. Math. \textbf{270} (2015), 138--180.
  
  \bibitem{Ka-95}
   T. Kato, \emph{Perturbation theory for linear operators}. Classics in Mathematics, Springer-Verlag, Berlin, 1995.
   
   
   \bibitem{LeSo-18}
   E. Lenzmann and J. Sok, \emph{A sharp rearrangement principle in Fourier space and symmetry results for PDEs with arbitrary order}. Int. Math. Res. Not. IMRN (2021), no. 19, 15040--15081.
   
   
   \bibitem{Li-60}
J.~E. Littlewood,
\emph{On the inequalities between functions {$f$} and {$f^{\ast} $}}.
J. London Math. Soc. \textbf{35} (1960), 352--365.

\bibitem{MoSc-09}
G.~Mockenhaupt and W.~Schlag,
\emph{On the {H}ardy-{L}ittlewood majorant problem for random sets}. J. Funct. Anal. \textbf{256} (2009), no. 4, 1189--1237.

   
\bibitem{ReSi-78}
    M. Reed and B. Simon, \emph{Methods of modern mathematical physics. {IV}. {A}nalysis of
              operators}. Academic Press, New York-London, 1978.
              
      \bibitem{OCon-73}
    A. J. O'Connor, 
    \emph{Exponential decay of bound state wave functions}.
   Comm. Math. Phys. \textbf{32} (1973), 319--340.
   

\end{thebibliography}
\end{document}